\newtheorem{lemma}{Lemma}[section]
\newtheorem{theorem}[lemma]{Theorem}
\newtheorem{corollary}[lemma]{Corollary}
\newtheorem{proposition}[lemma]{Proposition}
\theoremstyle{definition}
\newtheorem{definition}[lemma]{Definition}
\newtheorem{defconst}[lemma]{Definition/Construction}
\newtheorem{remark}[lemma]{Remark}
\newtheorem{remarks}[lemma]{Remarks}
\newcommand{\ds}[1]{\!\!\left<#1\right>}
\newcommand{\tw}[2]{t^{\left<#1|#2\right>}}
\newcommand{\env}{^{\rm e}}
\DeclareMathOperator{\HH}{HH}
\DeclareMathOperator{\card}{card}
\DeclareMathOperator{\Tot}{Tot}
\newcommand{\then}{\Rightarrow}
\DeclareMathAlphabet{\mathpzc}{OT1}{pzc}{m}{it}
\DeclareMathOperator{\modules}{-mod} \renewcommand{\mod}{\modules}
\DeclareMathOperator{\proj}{-proj}
\DeclareMathOperator{\stabmod}{-\underline{mod}}
\DeclareMathOperator{\End}{End} \DeclareMathOperator{\Hom}{Hom}
\DeclareMathOperator{\Ext}{Ext}
\DeclareMathOperator{\Rad}{Rad}
\DeclareMathOperator{\Ker}{Ker}
\DeclareMathOperator{\gld}{gld}
\DeclareMathOperator{\repdim}{repdim}
\DeclareMathOperator{\op}{op}
\DeclareMathOperator{\cx}{cx}
\newcommand{\gr}{_{\rm gr}}
\DeclareMathOperator{\grHom}{grHom}
\def\clap#1{\hbox to 0pt{\hss#1\hss}}
\title{Cohomology of twisted tensor products}
\author{Petter Andreas Bergh}
\author{Steffen Oppermann}
\address{Institutt for matematiske fag\\ NTNU\\ 7491 Trondheim\\ Norway}
\email{bergh@math.ntnu.no}
\email{Steffen.Oppermann@math.ntnu.no}
\subjclass[2000]{16E40, 16S80, 16U80, 81R50}
\keywords{Twisted tensor products, Hochschild cohomology, quantum
complete intersections}
\begin{document}

\begin{abstract}
It is well known that the cohomology of a tensor product is
essentially the tensor product of the cohomologies. We look at
twisted tensor products, and investigate to which extend this is
still true. We give an explicit description of the $\Ext$-algebra of
the tensor product of two modules, and under certain additional
conditions, describe an essential part of the Hochschild cohomology
ring of a twisted tensor product. As an application, we characterize
precisely when the cohomology groups over a quantum complete
intersection are finitely generated over the Hochschild cohomology
ring. Moreover, both for quantum complete intersections and in
related cases we obtain a lower bound for the representation
dimension of the algebra.
\end{abstract}

\maketitle

\section{Introduction}
Given a field $k$ and two $k$-algebras $\Lambda$ and $\Gamma$, one
may look at their tensor product $\Lambda \otimes_k \Gamma$. This is
an algebra where multiplication is done componentwise. In other
words, we use the multiplications in $\Lambda$ and $\Gamma$, and
define elements of $\Lambda$ and $\Gamma$ to commute with one
another. Given a $\Lambda$-module $M$ and a $\Gamma$-module $N$, it
is well known that
\begin{eqnarray*}
\Ext^*_{\Lambda \otimes_k \Gamma}(M \otimes_k N) & = &
\Ext^*_{\Lambda}(M) \overline{\otimes}_k \Ext^*_{\Gamma}(N), \\
\HH^*(\Lambda \otimes_k \Gamma) & = & \HH^*(\Lambda)
\overline{\otimes}_k \HH^*(\Gamma),
\end{eqnarray*}
where $\overline{\otimes}$ is the usual tensor product, but with
elements of odd degree anticommuting (and where $\HH^*$ denotes the
Hochschild cohomology ring).

In this paper we shall study graded algebras and twisted tensor
products. That is, for two graded algebras $\Lambda$ and $\Gamma$,
we give their tensor product an algebra structure by defining
elements from $\Lambda$ and $\Gamma$ to commute up to certain
scalars, depending on the degrees of the elements. We denote these
twisted tensor products by $\Lambda \otimes_k^t \Gamma$. Examples of
algebras obtained in this way are quantum exterior algebras (see
\cite{Bergh}, \cite{Beqext}, \cite{ESqext}), and, more generally,
quantum complete intersections (see \cite{BEH}, \cite{BeEqci},
\cite{BeO}).

The first main result of this paper
(Theorem~\ref{theorem.moduleext}) shows that the first formula above
holds for twisted tensor products. More precisely, we may make the
identification
\[ \Ext^*_{\Lambda \otimes_k^t \Gamma}(M \otimes_k^t N) =
\Ext^*_{\Lambda}(M) \otimes_k^{\tilde{t}} \Ext^*_{\Gamma}(N), \]
where the twist on the right-hand side is the combination of the
twist we started with and the sign which already occurred in the
classical case. This formula allows us to give an explicit
description of the $\Ext$-algebra of the simple module over a
quantum complete intersection in Theorem~\ref{theorem.extring}. As
for the second formula above, we shall see
(Remark~\ref{remark.hhnotpos}) that in general it does not carry
over to twisted tensor products. However, in
Theorem~\ref{theorem.hochschild} we show that the Hochschild
cohomology ring of a twisted tensor product contains a subalgebra,
which is the twisted tensor product of corresponding subalgebras of
the Hochschild cohomology rings of the factors. Under certain
additional conditions, we show that these subalgebras are big enough
to contain all the information on finite generation and complexity
(Corollary~\ref{corollary.finindexok}). When finite generation
holds, we may use these subalgebras and a result from \cite{Bergh}
to find a lower bound for the representation dimension of the
twisted tensor product.

In the final section we apply these results to quantum complete intersections. In
particular, we show that the cohomology groups of such an algebra
are all finitely generated over the Hochschild cohomology ring if
and only if all the commutator parameters are roots of unity
(Theorem~\ref{theorem.mainqci}). This allows us to give a lower
bound for the representation dimension of these algebras
(Corollary~\ref{corollary.lowerrepdimqci}), thus generalizing the
result of \cite{BeO}.

\section{Notation}

Throughout this paper, we fix a field $k$. All algebras considered
are assumed to be associative $k$-algebras.

\begin{definition}
Let $A$ be an abelian group. An \emph{$A$-graded} algebra is an
algebra $\Lambda$ together with a decomposition $\Lambda = \oplus_{a
\in A} \Lambda_a$ as $k$-vector spaces, such that $\Lambda_a \cdot
\Lambda_{a^{\prime}} \subseteq \Lambda_{a + a^{\prime}}$. A module
$M$ over such a graded algebra $\Lambda$ is a \emph{graded module}
if it has a decomposition $M = \oplus_{a \in A} M_a$ as $k$-vector
spaces, such that $\Lambda_a \cdot M_{a^{\prime}} \subseteq M_{a +
a^{\prime}}$. We denote the category of finitely generated graded
$\Lambda$-modules by $\Lambda\mod\gr$.
\end{definition}

Let $\Lambda$, $A$ and $M$ be as above. We denote the degree of
homogeneous elements $\lambda \in \Lambda$ and $m \in M$ by
$|\lambda|$ and $|m|$, respectively. For an element $a \in A$ we
denote by $M\ds{a}$ the shift of $M$ having the same
$\Lambda$-module structure as $M$, but with $M\ds{a}_{a^{\prime}}
= M_{a^{\prime} - a}$. Now let $M^{\prime}$ be another graded
$\Lambda$-module. To distinguish between graded and ungraded
morphisms, we denote the set of all $\Lambda$-morphisms from $M$
to $M^{\prime}$ by $\Hom_{\Lambda}(M, M^{\prime})$, and the set of
degree preserving morphisms by $\grHom_{\Lambda}(M, M^{\prime})$.
With this notation we obtain a decomposition
\[ \Hom_{\Lambda}(M, M^{\prime}) = \oplus_{a \in A} \grHom_{\Lambda}(M, M^{\prime}\ds{a}). \]
Setting $\Hom_{\Lambda}(M, M^{\prime})_{a} = \grHom_{\Lambda}(M,
M^{\prime}\ds{a})$ turns $\End_{\Lambda}(M)$ and
$\End_{\Lambda}(M^{\prime})$ into $A$-graded algebras, and
$\Hom(M,M^{\prime})$ into a graded
$\End_{\Lambda}(M)$-$\End_{\Lambda}(M^{\prime})$ bimodule. Since $M$
has a graded projective resolution $\mathbb{P}$, we can also define
$\Ext^{i,a}_{\Lambda}(M, M^{\prime}) \stackrel{\text{def}}{=} H^i
(\grHom(\mathbb{P}, M^{\prime}\ds{a}))$. It follows that
$\Ext^*_{\Lambda}(M, M)$ and $\Ext^*_{\Lambda}(M^{\prime},
M^{\prime})$ are $(\mathbb{Z} \oplus A)$-graded algebras, and that
$\Ext^*_{\Lambda}(M, M^{\prime})$ is a graded $\Ext^*_{\Lambda}(M,
M)$-$\Ext^*_{\Lambda}(M^{\prime}, M^{\prime})$ bimodule.

Our main objects of study in this paper are twisted tensor products
of two graded algebras, a concept we now define.

\begin{defconst}
Let $A$ and $B$ be abelian groups, let $\Lambda$ be an $A$-graded
algebra and $\Gamma$ a $B$-graded algebra. Let $t: A
\otimes_{\mathbb{Z}} B \rTo k^{\times}$ be a homomorphism of abelian
groups, where $k^{\times}$ denotes the multiplicative group of
nonzero elements in $k$. We write $\tw{a}{b} = t(a \otimes b)$, and,
by abuse of notation, also $\tw{\lambda}{\gamma} = \tw{| \lambda
|}{| \gamma |}$ for homogeneous elements $\lambda \in \Lambda$ and
$\gamma \in \Gamma$. The ($t$-)\emph{twisted tensor product} of
$\Lambda$ and $\Gamma$ is the algebra $\Lambda \otimes_k^t \Gamma$
defined by
\begin{align*}
\Lambda \otimes_k^t \Gamma & = \Lambda \otimes_k \Gamma
\hspace{2mm} \text{ as $k$-vector spaces,} \\
(\lambda \otimes \gamma) \cdot^t (\lambda^{\prime} \otimes
\gamma^{\prime}) & \stackrel{\text{def}}{=}
\tw{\lambda^{\prime}}{\gamma} \lambda \lambda^{\prime} \otimes
\gamma \gamma^{\prime},
\end{align*}
where $\lambda, \lambda' \in \Lambda$ and $\gamma, \gamma' \in
\Gamma$ are homogeneous elements.
\end{defconst}

A straightforward calculation shows that this is indeed a well
defined algebra. By defining $(\Lambda \otimes_k^t \Gamma)_{a,b}$ to
be $\Lambda_a \otimes_k \Gamma_b$, this algebra becomes $(A \oplus
B)$-graded.

We now define what it means for an algebra to have finitely
generated cohomolgy.

\begin{definition}
Let $\Lambda$ be an algebra. A commutative \emph{ring of cohomolgy
operators} is a commutative $\mathbb{Z}$-graded $k$-algebra $H$
together with graded $k$-algebra morphisms $\phi_M: H \rTo
\Ext^*_{\Lambda}(M,M)$, for every $M \in \Lambda\mod$, such that for
every pair $M,M^{\prime} \in \Lambda\mod$ the induced $H$-module
structures on $\Ext^*_{\Lambda}(M, M^{\prime})$ via $\phi_M$ and
$\phi_{M^{\prime}}$ coincide. If $A$ is an abelian group and
$\Lambda$ is $A$-graded, then we require that $H$ be a $(\mathbb{Z}
\oplus A)$-graded algebra, and that the maps $\phi_M$ are morphisms
of $(\mathbb{Z} \oplus A)$-graded algebras.
\end{definition}

The main example of such a ring $H$ is the even Hochschild
cohomology ring of an algebra. Namely, by \cite{Yoneda} the
Hochschild cohomology ring is graded commutative, so its even part
is commutative. Note that whenever an algebra is graded, then so is
its Hochschild cohomology ring, and its even part is a commutative
ring of graded cohomology operators.

\begin{definition}
An algebra $\Lambda$ satisfies the \emph{finite generation
hypothesis} {\bf{Fg}} if it has a commutative ring of operators $H$
which is Noetherian and of finite type (i.e.\ $\dim_k H_i < \infty$
for all $i$), and such that for any $M, M^{\prime} \in \Lambda\mod$
the $H$-module $\Ext^*_{\Lambda}(M, M^{\prime})$ is finitely
generated.
\end{definition}

We end this section with some remarks concerning finite generation
of cohomology.

\begin{remarks} \label{remark.simplesenough}
(i) Assume that $\Lambda$ is a finite dimensional algebra, and let
$H$ be a commutative Noetherian ring of cohomology operators. Then
all $\Ext^*_{\Lambda}(M, M^{\prime})$ are finitely generated over
$H$ if and only if $\Ext^*_{\Lambda}(\Lambda / \Rad \Lambda, \Lambda
/ \Rad \Lambda)$ is. This follows from an induction argument on the
length of $M$ and $M'$.

(ii) By \cite[Proposition~5.7]{SoSupportVar} the following are
equivalent for an algebra $\Lambda$.
\begin{enumerate}
\item $\Lambda$ satisfies {\bf{Fg}} with respect to its even
Hochschild cohomology ring $\HH^{2*} ( \Lambda )$,
 \item $\Lambda$ satisfies {\bf{Fg}}
with respect to some subalgebra of its even Hochschild cohomology
ring.
\end{enumerate}
\end{remarks}

\section{Tensor products of graded modules}

Throughout this section, we fix two abelian groups $A$ and $B$,
together with an $A$-graded algebra $\Lambda$ and a $B$-graded
algebra $\Gamma$. Moreover, we fix a homomorphism $t: A
\otimes_{\mathbb{Z}} B \rTo k^{\times}$ of abelian groups. Given a graded $\Lambda$-module and a graded
$\Gamma$-module, we construct a $\Lambda \otimes_k^t
\Gamma$-module, and study homomorphisms and extensions between
such modules.

\begin{defconst}
Given modules $M \in \Lambda\mod\gr$ and $N \in \Gamma\mod\gr$,
the tensor product $M \otimes_k N$ becomes a graded $\Lambda
\otimes_k^t \Gamma$-module by defining
\[ (\lambda \otimes \gamma) \cdot (m \otimes n)
\stackrel{\text{def}}{=} \tw{m}{\gamma} \lambda m \otimes \gamma
n.\] We denote this module by $M \otimes_k^t N$, its grading is
given by $(M \otimes_k^t N)_{a,b} = M_a \otimes_k N_b$.
\end{defconst}

We now prove some elementary results on these tensor products, the
first of which shows that the tensor product of two shifted
modules is the shifted tensor product.

\begin{lemma} \label{lemma.moduletensor.shiftsok}
Given modules $M \in \Lambda\mod\gr$ and $N \in \Gamma\mod\gr$,
the graded $\Lambda \otimes_k^t \Gamma$-modules $M\ds{a}
\otimes_k^t N\ds{b}$ and $(M \otimes_k^t N)\ds{a,b}$ are
isomorphic via the map
\begin{align*}
M\ds{a} \otimes_k^t N\ds{b} & \rTo (M \otimes_k^t N)\ds{a,b} \\
m \otimes n & \rMapsto \tw{a}{n} m \otimes n
\end{align*}
\end{lemma}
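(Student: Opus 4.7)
The map $\phi$ is evidently $k$-linear and invertible, since on each bihomogeneous piece $M_{a'-a} \otimes_k N_{b'-b}$ it is scalar multiplication by the nonzero element $\tw{a}{b'-b}$. It preserves the bigrading, because for $m \in M_{a'}$ and $n \in N_{b'}$ the element $m \otimes n$ sits in bidegree $(a'+a,\, b'+b)$ both in $M\ds{a} \otimes_k^t N\ds{b}$ (via the shifts applied to $M$ and $N$ separately) and in $(M \otimes_k^t N)\ds{a,b}$ (via the bidegree shift applied to the twisted tensor product). So the entire content of the lemma is that $\phi$ intertwines the two $\Lambda \otimes_k^t \Gamma$-actions.

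To check this, the plan is to fix homogeneous elements $\lambda \in \Lambda$, $\gamma \in \Gamma$, $m \in M$, $n \in N$ and expand both $\phi\bigl((\lambda \otimes \gamma) \cdot (m \otimes n)\bigr)$ and $(\lambda \otimes \gamma) \cdot \phi(m \otimes n)$ using the defining formula of the twisted tensor product of modules. The crucial point is that the action on $M\ds{a} \otimes_k^t N\ds{b}$ reads the degree of $m$ in the \emph{shifted} module $M\ds{a}$, so its twist factor is $\tw{|m|_M + a}{|\gamma|}$, whereas the action on $M \otimes_k^t N$ uses only $\tw{|m|_M}{|\gamma|}$. By bilinearity of $t \colon A \otimes_{\mathbb Z} B \rTo k^{\times}$ the former factors as $\tw{|m|_M}{|\gamma|} \cdot \tw{a}{|\gamma|}$, so the two actions on the common underlying vector space $M \otimes_k N$ differ by the single scalar $\tw{a}{|\gamma|}$.

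The scalar $\tw{a}{|n|}$ in the definition of $\phi$ is designed to absorb precisely this discrepancy: when $n$ is replaced by $\gamma n$ under the action, applying $\phi$ introduces $\tw{a}{|\gamma n|} = \tw{a}{|\gamma|} \cdot \tw{a}{|n|}$, while applying $\phi$ before the action contributes only $\tw{a}{|n|}$, and the ratio $\tw{a}{|\gamma|}$ is exactly what is needed to match the excess twist on the other side. Once this observation is made, the verification is a short manipulation in $k^{\times}$ using bilinearity of $t$. I expect the only real obstacle to be clerical: one must carefully distinguish the grading on $M$ from that on $M\ds{a}$ (and similarly for $N$ and $N\ds{b}$) when applying $\tw{m}{\gamma} = \tw{|m|}{|\gamma|}$, because the entire lemma lives or dies by that convention.
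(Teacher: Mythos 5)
There is a genuine gap, and it is precisely at the spot you flag as ``clerical.'' You correctly identify the two relevant discrepancies: (a) the action on $M\ds{a} \otimes_k^t N\ds{b}$ carries the extra twist $\tw{a}{|\gamma|}$ because it reads the degree of $m$ in $M\ds{a}$, and (b) $\phi(\gamma n)$ carries an extra $\tw{a}{|\gamma|}$ relative to $\phi(n)$ because $|\gamma n|_N = |\gamma| + |n|_N$. But you then assert the two ``match.'' They do not cancel: both discrepancies occur on the \emph{same} side of the homomorphism identity. Explicitly, writing $c(n) = \tw{a}{|n|_N}$ for the scalar in $\phi$, one has
\begin{align*}
\phi\bigl((\lambda\otimes\gamma)\cdot(m\otimes n)\bigr)
&= \tw{|m|+a}{|\gamma|}\; c(\gamma n)\; (\lambda m \otimes \gamma n)
= \tw{|m|}{|\gamma|}\,\tw{a}{|\gamma|}^{2}\,\tw{a}{|n|}\; (\lambda m \otimes \gamma n),\\
(\lambda\otimes\gamma)\cdot\phi(m\otimes n)
&= c(n)\,\tw{|m|}{|\gamma|}\; (\lambda m \otimes \gamma n)
= \tw{a}{|n|}\,\tw{|m|}{|\gamma|}\; (\lambda m \otimes \gamma n),
\end{align*}
so the two sides differ by $\tw{a}{|\gamma|}^{2}$, which is not $1$ in general. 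The required identity for a homomorphism is $c(\gamma n) = \tw{a}{|\gamma|}^{-1} c(n)$, and the scalar $\tw{a}{|n|_N}$ satisfies $c(\gamma n) = \tw{a}{|\gamma|}\,c(n)$ --- the opposite sign. The correction that actually works, with the shift convention $M\ds{a}_{a'} = M_{a'-a}$, is $\phi(m\otimes n) = \tw{a}{n}^{-1}\,m\otimes n$; equivalently, the stated formula with $\tw{a}{n}$ does define a homomorphism, but in the reverse direction $(M\otimes_k^t N)\ds{a,b} \rTo M\ds{a}\otimes_k^t N\ds{b}$. Your strategy (expand both sides and compare scalars) is the right one and is all the paper does as well; the problem is that the short manipulation you postpone as routine is exactly where the sign must be pinned down, and as written your sketch resolves it incorrectly.
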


\begin{proof}
The given map is clearly bijective, and it is straightforward to
verify that it is a homomorphism.
\end{proof}

The following lemma shows that the tensor product of projective
modules is again projective. Given a graded algebra $\Delta$, we
denote by $\Delta\proj$ the category of finitely generated
projective $\Delta$-modules, and by $\Delta\proj\gr$ the category
of finitely generated graded projective $\Delta$-modules.

\begin{lemma} \label{lemma.tensorproj}
Given modules $P \in \Lambda\proj\gr$ and $Q \in \Gamma\proj\gr$,
the tensor product $P \otimes_k^t Q$ is a graded projective
$\Lambda \otimes_k^t \Gamma$-module.
\end{lemma}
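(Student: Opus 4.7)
The plan is to reduce to the case of graded free modules of rank one and then bootstrap. Concretely, a finitely generated graded projective $\Lambda$-module $P$ is by definition a direct summand of some finite graded free module $F = \bigoplus_{i=1}^m \Lambda\ds{a_i}$, and similarly $Q$ is a direct summand of $G = \bigoplus_{j=1}^n \Gamma\ds{b_j}$. Since the functor $- \otimes_k^t -$ sends split monomorphisms to split monomorphisms (the splitting carries over after applying the underlying bifunctor $- \otimes_k -$, and the module structure on the ambient space forces the induced maps to be $\Lambda \otimes_k^t \Gamma$-linear), it suffices to show that $F \otimes_k^t G$ is graded free.

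The next step is to note that $- \otimes_k^t -$ commutes with finite direct sums in each argument; this is immediate from the construction, since underlying $k$-vector spaces are just the ordinary tensor product. Therefore $F \otimes_k^t G$ decomposes as $\bigoplus_{i,j} \Lambda\ds{a_i} \otimes_k^t \Gamma\ds{b_j}$, and we are reduced to the rank-one case.

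At this point I apply Lemma~\ref{lemma.moduletensor.shiftsok} to identify each summand $\Lambda\ds{a_i} \otimes_k^t \Gamma\ds{b_j}$ with $(\Lambda \otimes_k^t \Gamma)\ds{a_i, b_j}$. Since $\Lambda \otimes_k^t \Gamma$ is the regular module over itself, this is a graded free module of rank one, and so $F \otimes_k^t G$ is graded free of finite rank. Consequently $P \otimes_k^t Q$ is a direct summand of a finitely generated graded free $\Lambda \otimes_k^t \Gamma$-module, i.e.\ finitely generated graded projective.

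The only subtle point (and the place where I would have to pause and verify) is the very first reduction: I need the direct summand splitting $P \oplus P' \cong F$ in $\Lambda\mod\gr$ together with $Q \oplus Q' \cong G$ in $\Gamma\mod\gr$ to produce a splitting of $P \otimes_k^t Q$ off of $F \otimes_k^t G$ in $(\Lambda \otimes_k^t \Gamma)\mod\gr$. This is routine once one checks that the identity and the idempotent splittings, viewed as $k$-linear maps on the underlying spaces, commute with the twisted action on either side; this follows from the fact that the twist $\tw{m}{\gamma}$ used in the definition of $M \otimes_k^t N$ depends only on the degrees of $m$ and $\gamma$, and $\Lambda$- resp.\ $\Gamma$-linear maps preserve those degrees on the appropriate factor. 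Everything else is bookkeeping with shifts.
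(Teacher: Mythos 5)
Your proof is correct and follows essentially the same route as the paper: reduce to shifted free summands via the standard decomposition of finitely generated graded projectives, then invoke Lemma~\ref{lemma.moduletensor.shiftsok} to identify $\Lambda\ds{a}\otimes_k^t\Gamma\ds{b}$ with $(\Lambda\otimes_k^t\Gamma)\ds{a,b}$. The paper's one-line proof compresses the same reduction into a single appeal to the shift lemma; you merely spell out the intermediate functorial steps (compatibility with direct sums and split monomorphisms) that the paper leaves implicit.
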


\begin{proof}
By Lemma~\ref{lemma.moduletensor.shiftsok} we only have to consider
the case $P = \Lambda$ and $Q = \Gamma$. In this case $P \otimes_k^t
Q = \Lambda \otimes_k^t \Gamma$, so the lemma holds.
\end{proof}

As the following result shows, the tensor product of morphism
spaces is the morphism space of tensor products.

\begin{lemma} \label{lemma.tensorprodmorph}
Given modules $M, M^{\prime} \in \Lambda\mod\gr$ and $N,
N^{\prime} \in \Gamma\mod\gr$, the natural map
\[ \grHom_{\Lambda}(M, M^{\prime}) \otimes_k \grHom_{\Gamma}(N,
N^{\prime}) {\rTo} \grHom_{\Lambda \otimes_k^t \Gamma}(M
\otimes_k^t N, M^{\prime} \otimes_k^t N^{\prime}) \] is an
isomorphism.
\end{lemma}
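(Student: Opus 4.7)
The plan is to define the natural map explicitly, verify it is an isomorphism on rank-one free modules (possibly shifted), extend by additivity to all finitely generated graded free modules, and finally reduce the general case to the free case via finite presentations. The natural map sends an elementary tensor $f \otimes g$ to the $k$-linear map $m \otimes n \mapsto f(m) \otimes g(n)$. For well-definedness as a $\Lambda \otimes_k^t \Gamma$-morphism, note that a degree-preserving $f$ satisfies $|f(m)| = |m|$, so the twist $\tw{m}{\gamma}$ governing the action of $\lambda \otimes \gamma$ on $M \otimes_k^t N$ matches the twist $\tw{f(m)}{\gamma}$ governing the action on $M' \otimes_k^t N'$, and compatibility with the action is then immediate.

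For the base case $M = \Lambda\ds{a}$, $N = \Gamma\ds{b}$, I would apply Lemma~\ref{lemma.moduletensor.shiftsok} to identify $\Lambda\ds{a} \otimes_k^t \Gamma\ds{b}$ with $(\Lambda \otimes_k^t \Gamma)\ds{a,b}$, together with the standard evaluation-at-$1$ isomorphisms $\grHom_\Lambda(\Lambda\ds{a}, M') \cong M'_a$, $\grHom_\Gamma(\Gamma\ds{b}, N') \cong N'_b$, and $\grHom_{\Lambda \otimes_k^t \Gamma}((\Lambda \otimes_k^t \Gamma)\ds{a,b}, M' \otimes_k^t N') \cong M'_a \otimes_k N'_b$. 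Under these identifications the natural map becomes, up to the nonzero scalar of Lemma~\ref{lemma.moduletensor.shiftsok}, the identity on $M'_a \otimes_k N'_b$, hence an isomorphism. By additivity of $\grHom$, of $\otimes_k$, and of $\otimes_k^t$ in each variable, the natural map is then an isomorphism whenever both $M$ and $N$ are finitely generated graded free.

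For the general case I would proceed in two stages. First fix a finitely generated graded free $\Gamma$-module $N$ and let $M$ vary. Choose a finite graded presentation $F_1 \to F_0 \to M \to 0$ in $\Lambda\mod\gr$. Since every $k$-vector space is flat, the sequence $F_1 \otimes_k^t N \to F_0 \otimes_k^t N \to M \otimes_k^t N \to 0$ remains exact in $(\Lambda \otimes_k^t \Gamma)\mod\gr$. Applying $\grHom_{\Lambda \otimes_k^t \Gamma}(-, M' \otimes_k^t N')$ to it yields a left exact sequence; applying $\grHom_\Lambda(-, M')$ to the presentation and then tensoring the result over $k$ with $\grHom_\Gamma(N, N')$ also yields a left exact sequence (again using flatness over $k$). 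The natural map provides a morphism between these two left exact sequences whose two right-hand vertical arrows are already known to be isomorphisms by the free case, so the five lemma forces the third vertical arrow to be an isomorphism as well. This establishes the lemma for arbitrary $M$ with $N$ still free. Repeating the argument with $M$ now fixed and $N$ varying over a finite presentation $G_1 \to G_0 \to N \to 0$ completes the proof.

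The main obstacle is the two-variable bookkeeping in the last step; treating $M$ and $N$ in sequence, each time with the other held free, reduces the verification to an elementary five-lemma argument and avoids invoking any Künneth-style spectral sequence.
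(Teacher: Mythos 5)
Your proof is correct and follows essentially the same route as the paper's: verify the map is an isomorphism on rank-one shifted free modules $\Lambda\ds{a}$, $\Gamma\ds{b}$ via the evaluation-at-$1$ identifications and Lemma~\ref{lemma.moduletensor.shiftsok}, then extend by additivity and by noting that both sides turn cokernels in the $M$ and $N$ positions into kernels. Your two-stage five-lemma argument is simply a more detailed spelling-out of the paper's one-line remark that ``both sides commute with cokernels in the $M$ and $N$ position.''
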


\begin{proof}
If $M = \Lambda\ds{a}$ and $N = \Gamma\ds{b}$ for some $a \in A$
and $b \in B$, then
\begin{align*}
\grHom_{\Lambda}(\Lambda\ds{a}, M^{\prime}) \otimes_k
\grHom_{\Gamma}(\Gamma\ds{b}, N^{\prime}) & = M^{\prime}_{-a}
\otimes_k N^{\prime}_{-b} \\
& = (M^{\prime} \otimes_k^t N^{\prime})_{-a,-b} \\
& = \grHom((\Lambda \otimes_k^t \Gamma)\ds{a,b}, M^{\prime} \otimes_k^t N^{\prime}) \\
& = \grHom(\Lambda\ds{a} \otimes_k^t \Gamma\ds{b}), M^{\prime}
\otimes_k^t N^{\prime}).
\end{align*}
Now note that both sides commute with cokernels in the $M$ and $N$ position.
\end{proof}

Note that given degree $a$ and $b$ morphisms $\varphi: M \rTo
M^{\prime}$ and $\psi: N \to N^{\prime}$, we obtain a degree
$(a,b)$-morphism
\[ \varphi \otimes \psi: M \otimes N \rTo M^{\prime} \otimes N^{\prime} \]
by composing the maps from Lemmas~\ref{lemma.tensorprodmorph} and
\ref{lemma.moduletensor.shiftsok}. Explicitly, the map is given by
\[ (m \otimes n) \cdot (\varphi \otimes \psi) = \tw{\varphi}{n} m \cdot \varphi \otimes n \cdot \psi \]
(we think of a module as a right module over its endomorphism ring).
By applying this to the situation $M = M^{\prime}$ and $N =
N^{\prime}$, we obtain the following result, showing that the
endomorphism ring of a tensor product is the tensor product of the
endomorphism rings.

\begin{lemma} \label{lemma.twistedendo}
Let $M \in \Lambda\mod\gr$ and $N \in \Gamma\mod\gr$. Then
\[ \End_{\Lambda \otimes_k^t \Gamma}(M \otimes_k^t N) = \End_{\Lambda}(M)
\otimes_k^t \End_{\Gamma}(N).\]
\end{lemma}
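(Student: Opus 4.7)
The plan is to deduce the algebra isomorphism from Lemma~\ref{lemma.tensorprodmorph} together with a direct computation of composition using the explicit formula for $\varphi \otimes \psi$ given just after that lemma.

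First I would obtain the underlying ($(A \oplus B)$-graded) $k$-vector space isomorphism. By Lemma~\ref{lemma.tensorprodmorph}, for each pair $(a,b) \in A \oplus B$ we have
\[ \grHom_{\Lambda}(M, M\ds{a}) \otimes_k \grHom_{\Gamma}(N, N\ds{b}) \rTo^{\sim} \grHom_{\Lambda \otimes_k^t \Gamma}(M \otimes_k^t N, M\ds{a} \otimes_k^t N\ds{b}). \]
Using Lemma~\ref{lemma.moduletensor.shiftsok} to identify $M\ds{a} \otimes_k^t N\ds{b}$ with $(M \otimes_k^t N)\ds{a,b}$ and summing over all $(a,b)$, this gives a $k$-linear isomorphism
\[ \End_{\Lambda}(M) \otimes_k \End_{\Gamma}(N) \rTo^{\sim} \End_{\Lambda \otimes_k^t \Gamma}(M \otimes_k^t N) \]
respecting the $(A \oplus B)$-gradings on both sides.

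Second, I would verify that this isomorphism intertwines the multiplications, so that on the left the resulting product is precisely the twisted tensor product. Viewing endomorphisms as acting on the right, composition reads $(m \otimes n) \cdot ((\varphi \otimes \psi)(\varphi' \otimes \psi')) = ((m \otimes n) \cdot (\varphi \otimes \psi)) \cdot (\varphi' \otimes \psi')$. Applying the formula $(m \otimes n) \cdot (\varphi \otimes \psi) = \tw{\varphi}{n}\, m \cdot \varphi \otimes n \cdot \psi$ twice and using bilinearity of $t$ (so that $\tw{\varphi'}{n\psi} = \tw{\varphi'}{n}\tw{\varphi'}{\psi}$ and $\tw{\varphi}{n}\tw{\varphi'}{n} = \tw{\varphi\varphi'}{n}$) yields
\[ (m \otimes n) \cdot ((\varphi \otimes \psi)(\varphi' \otimes \psi')) = \tw{\varphi\varphi'}{n} \tw{\varphi'}{\psi}\, m\varphi\varphi' \otimes n\psi\psi'. \]
On the other hand, in $\End_{\Lambda}(M) \otimes_k^t \End_{\Gamma}(N)$ we have by definition $(\varphi \otimes \psi) \cdot^t (\varphi' \otimes \psi') = \tw{\varphi'}{\psi}\, \varphi\varphi' \otimes \psi\psi'$, and a single application of the action formula gives
\[ (m \otimes n) \cdot \bigl( (\varphi \otimes \psi) \cdot^t (\varphi' \otimes \psi') \bigr) = \tw{\varphi'}{\psi} \tw{\varphi\varphi'}{n}\, m\varphi\varphi' \otimes n\psi\psi'. \]
Since $M \otimes_k^t N$ is faithful over $\End_{\Lambda \otimes_k^t \Gamma}(M \otimes_k^t N)$, comparing the two expressions gives the result.

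The main obstacle is purely bookkeeping: choosing a consistent side for the action of endomorphisms and correctly expanding $\tw{\varphi'}{n\psi}$ and $\tw{\varphi\varphi'}{n}$ so that the twist factors collect into the form prescribed by $\cdot^t$. No new conceptual ingredient is needed beyond Lemmas~\ref{lemma.moduletensor.shiftsok} and~\ref{lemma.tensorprodmorph}.
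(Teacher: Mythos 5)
Your argument is correct and is exactly what the paper intends: the paper states this lemma without a written proof, presenting it as an immediate consequence of Lemmas~\ref{lemma.moduletensor.shiftsok} and~\ref{lemma.tensorprodmorph} via the explicit action formula $(m \otimes n) \cdot (\varphi \otimes \psi) = \tw{\varphi}{n}\, m\varphi \otimes n\psi$, and your write-up simply carries out that verification. The twist bookkeeping is done consistently with the grading convention the paper actually uses in the proof of Lemma~\ref{lemma.tensorprodmorph} (a degree-$a$ morphism raises degree by $a$, so $|n\psi| = |n| + |\psi|$), and the two sides collect into $\tw{\varphi'}{\psi}\tw{\varphi\varphi'}{n}\, m\varphi\varphi' \otimes n\psi\psi'$ as required.
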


As for projective resolutions, the behavior is also as expected.
Namely, the following result shows that the tensor product of two
projective resolutions is again a projective resolution.

\begin{lemma} \label{lemma.moduletensor.projresol}
Given modules $M \in \Lambda\mod\gr$ and $N \in \Gamma\mod\gr$
with graded projective resolutions
\begin{align*}
& \mathbb{P}: \cdots \rTo P_i \rTo \cdots \rTo P_1 \rTo P_0 \rTo M
\rTo 0, \\
& \mathbb{Q}: \cdots \rTo Q_i \rTo \cdots \rTo Q_1 \rTo Q_0 \rTo N
\rTo 0,
\end{align*}
the total complex of $\mathbb{P} \otimes_k^t \mathbb{Q}$ is a
graded projective resolution of $M \otimes_k^t N$.
\end{lemma}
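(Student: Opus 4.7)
The plan is to verify three things: that every term of the total complex is a finitely generated graded projective $\Lambda \otimes_k^t \Gamma$-module, that the augmented total complex is exact as a sequence of $k$-vector spaces, and that the induced module structure on the zeroth homology is that of $M \otimes_k^t N$.

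Projectivity of the terms is the easy part. The $n$-th term of $\Tot(\mathbb{P} \otimes_k^t \mathbb{Q})$ is the finite direct sum $\bigoplus_{i+j=n} P_i \otimes_k^t Q_j$ (finite because both resolutions are concentrated in non-negative homological degree), and each summand is graded projective by Lemma~\ref{lemma.tensorproj}. The standard bicomplex differentials $d_P \otimes 1$ and $1 \otimes d_Q$ (with the usual sign) are $\Lambda \otimes_k^t \Gamma$-linear: the twist factors $\tw{p}{\gamma}$ appearing in the action only depend on the internal $(A \oplus B)$-degree, and the differentials preserve those degrees, so linearity over $\Lambda \otimes_k^t \Gamma$ reduces to linearity of $d_P$ over $\Lambda$ and $d_Q$ over $\Gamma$.

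The main, and essentially only, obstacle is exactness, and the key observation is that \emph{the twist is invisible at the level of $k$-vector spaces}. Viewed as a complex of $k$-vector spaces with its $k$-linear differentials, $\Tot(\mathbb{P} \otimes_k^t \mathbb{Q})$ coincides with the untwisted total complex $\Tot(\mathbb{P} \otimes_k \mathbb{Q})$; the twist only enters in the $\Lambda \otimes_k^t \Gamma$-action. So exactness reduces to the classical statement that the total complex of the tensor product over a field of two resolutions is a resolution of the tensor product. This is standard: since every $k$-module is flat, tensoring the exact sequence $\mathbb{Q} \rTo N \rTo 0$ with each $P_i$ preserves exactness, and a spectral-sequence (or iterated cone) argument on the resulting bicomplex collapses to give $M \otimes_k N$ concentrated in degree zero.

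Finally, the $\Lambda \otimes_k^t \Gamma$-module structure on this degree-zero homology is read off directly: it is the cokernel of $(P_1 \otimes_k^t Q_0) \oplus (P_0 \otimes_k^t Q_1) \rTo P_0 \otimes_k^t Q_0$, and functoriality of the construction $(-) \otimes_k^t (-)$ applied to the augmentations $P_0 \rTo M$ and $Q_0 \rTo N$ identifies this cokernel with $M \otimes_k^t N$.
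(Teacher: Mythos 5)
Your proof is correct and follows the same route as the paper: projectivity of the terms via Lemma~\ref{lemma.tensorproj}, and exactness from the observation that the twist does not affect the underlying complex of $k$-vector spaces, reducing to the classical K\"unneth statement over a field. You simply spell out details (degree-zero differentials are untwisted; identification of the degree-zero homology) that the paper's two-sentence proof leaves implicit.
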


\begin{proof}
By Lemma~\ref{lemma.tensorproj} all the terms of the total complex
$\Tot(\mathbb{P} \otimes_k^t \mathbb{Q})$ of $\mathbb{P}
\otimes_k^t \mathbb{Q}$ are projective. Moreover, since $k$ is a
field $\Tot(\mathbb{P} \otimes_k^t \mathbb{Q})$ is exact.
\end{proof}

We are now ready to prove the main result of this section. It
shows that the $\Ext$-algebra of a tensor product is the tensor
product of the $\Ext$-algebras.

\begin{theorem} \label{theorem.moduleext}
If $M, M^{\prime} \in \Lambda\mod\gr$ and $N, N^{\prime} \in
\Gamma\mod\gr$ are modules, then
\[ \Ext^*_{\Lambda \otimes_k^t \Gamma}(M \otimes_k^t N, M \otimes_k^t N) =
\Ext^*_{\Lambda}(M, M) \otimes_k^{\tilde{t}} \Ext^*_{\Gamma}(N, N), \]
with $\tilde{t}((i,a),(j,b)) = (-1)^{ij} \tw{a}{b}$.
Moreover
\[ \Ext^*_{\Lambda \otimes_k^t \Gamma}(M \otimes_k^t N, M^{\prime}
\otimes_k^t N^{\prime}) = \Ext^*_{\Lambda}(M, M^{\prime})
\otimes_k^{\tilde{t}} \Ext^*_{\Gamma}(N, N^{\prime}) \] as
$\Ext^*_{\Lambda}(M, M) \otimes_k^t \Ext^*_{\Gamma}(N, N) -
\Ext^*_{\Lambda} (M^{\prime}, M^{\prime}) \otimes_k^t
\Ext^*_{\Gamma}(N^{\prime}, N^{\prime})$ bimodule.
\end{theorem}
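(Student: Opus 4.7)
The plan is to produce the claimed identification explicitly, starting from the projective resolution supplied by Lemma~\ref{lemma.moduletensor.projresol} and finishing with a Yoneda-product check that accounts for both the sign and the twist.

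First I would fix graded projective resolutions $\mathbb{P} \rTo M$ and $\mathbb{Q} \rTo N$; by Lemma~\ref{lemma.moduletensor.projresol} the total complex $\mathbb{R} = \Tot(\mathbb{P} \otimes_k^t \mathbb{Q})$ is a graded projective resolution of $M \otimes_k^t N$. Applying $\grHom_{\Lambda \otimes_k^t \Gamma}\bigl(-, (M' \otimes_k^t N')\ds{a,b}\bigr)$ termwise and using Lemma~\ref{lemma.tensorprodmorph} to split each $\grHom$ as a tensor product, combined with Lemma~\ref{lemma.moduletensor.shiftsok} to absorb the shift $\ds{a,b}$, I obtain an isomorphism of complexes of $k$-vector spaces
\[
\grHom_{\Lambda \otimes_k^t \Gamma}\bigl(\mathbb{R}, (M' \otimes_k^t N')\ds{a,b}\bigr) \cong \Tot\bigl(\grHom_\Lambda(\mathbb{P}, M'\ds{a}) \otimes_k \grHom_\Gamma(\mathbb{Q}, N'\ds{b})\bigr).
\]
I would then invoke the Künneth theorem for complexes of $k$-vector spaces to get, for each triple $(n,a,b)$,
\[
\Ext^{n,(a,b)}_{\Lambda \otimes_k^t \Gamma}(M \otimes_k^t N, M' \otimes_k^t N') \cong \bigoplus_{i+j=n} \Ext^{i,a}_\Lambda(M, M') \otimes_k \Ext^{j,b}_\Gamma(N, N'),
\]
which is the underlying $(\mathbb{Z} \oplus A \oplus B)$-graded vector space identification.

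Next I would upgrade this to the asserted algebra- and bimodule-structures by comparing Yoneda products on the left with the twisted tensor product on the right. A class on each factor is represented by a graded chain map on $\mathbb{P}$ respectively $\mathbb{Q}$; the tensor product of two such maps is a graded chain map on $\mathbb{R}$, and Yoneda composition is computed by composing these at the chain level. When one commutes a tensor factor of cohomological degree $i'$ and $A$-degree $a'$ past a factor of bidegree $(j,b)$, two scalars appear: a Koszul sign $(-1)^{i'j}$ forced by the differential of $\Tot$ (equivalently by the shift isomorphism $(\mathbb{P}[i]) \otimes_k^t (\mathbb{Q}[j]) \cong (\mathbb{P} \otimes_k^t \mathbb{Q})[i+j]$), and the scalar $\tw{a'}{b}$ coming from the definition of $\cdot^t$ in $\Lambda \otimes_k^t \Gamma$ together with the shift formula of Lemma~\ref{lemma.moduletensor.shiftsok}. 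Their product is exactly $\tilde t((i',a'),(j,b))$, which is the definition of the multiplication in $\Ext^*_\Lambda(M,M) \otimes_k^{\tilde t} \Ext^*_\Gamma(N,N)$. The bimodule claim follows by specializing the left and right actions in the same chain-level diagram.

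The main obstacle I expect is the sign-and-twist bookkeeping in the last step: one has to pin down compatible conventions for the total-complex differential, for Yoneda composition, and for the shift isomorphism of Lemma~\ref{lemma.moduletensor.shiftsok}, and then verify that their combined contribution really is $\tilde t$ rather than some closely related variant. The other ingredients---the Künneth formula over the field $k$, and the good behaviour of projectives and of $\grHom$ under $\otimes_k^t$---are already in place via Lemmas~\ref{lemma.tensorproj}--\ref{lemma.moduletensor.projresol}, so once conventions are fixed the remainder of the argument is essentially formal.
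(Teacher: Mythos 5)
Your proposal follows the paper's proof almost step for step: both resolve $M$ and $N$, invoke Lemma~\ref{lemma.moduletensor.projresol} to get $\Tot(\mathbb{P}\otimes_k^t\mathbb{Q})$ as a resolution of $M\otimes_k^t N$, apply Lemma~\ref{lemma.tensorprodmorph} to split the $\Hom$-complex, use the field hypothesis (K\"unneth) to pass to cohomology, and finally trace the multiplication through Lemma~\ref{lemma.twistedendo} together with the total-complex sign. The only difference is that you spell out the K\"unneth step and the chain-level Yoneda/sign bookkeeping somewhat more explicitly than the paper does, but the argument is the same.
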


\begin{proof}
Let $\mathbb{P}$ and $\mathbb{Q}$ be graded projective resolutions
of $M$ and $N$ receptively. Then by
Lemma~\ref{lemma.moduletensor.projresol} $\Tot(\mathbb{P}
\otimes_k^t \mathbb{Q})$ is a projective resolution of $M
\otimes_k^t N$, and therefore
\begin{align*}
\Ext^*_{\Lambda \otimes_k^t \Gamma}(M \otimes_k^t N, M'
\otimes_k^t N') & = H^*(\Hom_{\Lambda \otimes_k^t
\Gamma}(\Tot(\mathbb{P} \otimes_k^t \mathbb{Q}), M^{\prime}
\otimes_k^t N^{\prime}) \\ & = H^*(\Tot(\Hom_{\Lambda \otimes_k^t
\Gamma}(\mathbb{P} \otimes_k^t \mathbb{Q}), M^{\prime} \otimes_k^t
N^{\prime}) \\ & = H^*(\Tot(\Hom_{\Lambda}(\mathbb{P}, M^{\prime})
\otimes_k^t \Hom_{\Gamma}(\mathbb{Q}, N^{\prime}))) \\ & =
\Ext^*_{\Lambda}(M, M^{\prime}) \otimes_k^{\tilde{t}}
\Ext^*_{\Gamma}(N, N^{\prime}).
\end{align*}
Here the third equality holds by
Lemma~\ref{lemma.tensorprodmorph}, whereas the final one holds
since $k$ is a field. The multiplication is induced by the
multiplication of morphisms in Lemma~\ref{lemma.twistedendo}, with
the additional signs needed because of the signs added when
passing from the double complex to its total complex.
\end{proof}

We end this section with the following result, which was shown in
\cite{MR1749892} for untwisted tensor products (in which case we
may forget about the grading). It will help us find upper bounds
for the representation dimension of twisted tensor products. Given
an algebra $\Delta$, we denote by $\gld \Delta$ its global
dimension.

\begin{proposition} \label{proposition.repdimadd}
Let $M \in \Lambda\mod\gr$ and $N \in \Gamma\mod\gr$ be graded
modules, such that $M$ generates and cogenerates $\Lambda\mod$,
and such that $N$ generates and cogenerates $\Gamma \mod$. Then $M
\otimes_k^t N$ is a generator-cogenerator of $\Lambda \otimes_k^t
\Gamma\mod$, and $\gld \End_{\Lambda \otimes_k^t \Gamma} (M
\otimes_k^t N) = \gld \End_{\Lambda}(M) + \gld \End_{\Gamma}(N)$.
\end{proposition}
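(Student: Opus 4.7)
I would split the proposition into two assertions and prove each separately.

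For the generator-cogenerator property, the hypothesis supplies $\Lambda, D\Lambda \in \add M$ and $\Gamma, D\Gamma \in \add N$ as ungraded modules. Since $-\otimes_k^t -$ is additive in each variable, tensoring these summand relations yields $\Lambda \otimes_k^t \Gamma \in \add(M \otimes_k^t N)$ and $D\Lambda \otimes_k^t D\Gamma \in \add(M \otimes_k^t N)$. To finish, one identifies $D(\Lambda \otimes_k^t \Gamma)$ with $D\Lambda \otimes_k^t D\Gamma$: a direct dual-basis computation shows that the $k$-linear map $\phi \otimes \psi \mapsto \tw{\phi}{\psi}\,\phi \otimes \psi$ is an isomorphism of left $\Lambda \otimes_k^t \Gamma$-modules, the scalar $\tw{\phi}{\psi}$ being precisely what is needed to reconcile the two module actions on the common underlying vector space $D\Lambda \otimes_k D\Gamma$.

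For the global dimension formula, set $d_\Lambda = \gld \End_\Lambda(M)$, $d_\Gamma = \gld \End_\Gamma(N)$, and $E = \End_{\Lambda \otimes_k^t \Gamma}(M \otimes_k^t N)$; by Lemma~\ref{lemma.twistedendo}, $E \cong \End_\Lambda(M) \otimes_k^t \End_\Gamma(N)$. Since $t$ takes values in $k^\times$, the subset $R = \Rad \End_\Lambda(M) \otimes_k^t \End_\Gamma(N) + \End_\Lambda(M) \otimes_k^t \Rad \End_\Gamma(N)$ is a nilpotent two-sided ideal with semisimple quotient, so $R = \Rad E$, and the simple $E$-modules are precisely the twisted tensor products $S \otimes_k^t T$ of simples $S$ of $\End_\Lambda(M)$ and $T$ of $\End_\Gamma(N)$. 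For each such simple, Lemma~\ref{lemma.moduletensor.projresol} applied to the endomorphism algebras furnishes a projective resolution of length at most $d_\Lambda + d_\Gamma$ by tensoring minimal resolutions of $S$ and $T$, giving $\gld E \leq d_\Lambda + d_\Gamma$. For the reverse inequality, pick simples $S, S'$ of $\End_\Lambda(M)$ with $\Ext^{d_\Lambda}(S, S') \neq 0$ and analogously $T, T'$ of $\End_\Gamma(N)$; Theorem~\ref{theorem.moduleext} then delivers a nonvanishing $\Ext^{d_\Lambda + d_\Gamma}$ between $S \otimes_k^t T$ and $S' \otimes_k^t T'$ over $E$.

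The main obstacle is the structural claim $\Rad E = R$ together with the resulting classification of simples. This rests on the observation that a scalar-valued twist does not disturb the ideal or radical structure, so that a twisted tensor product of two semisimple algebras remains semisimple with simples factoring as $S \otimes_k^t T$; the verification requires some care, possibly under a splitting hypothesis on $k$. Once this structural fact is in place, the two inequalities follow routinely from the tensor-product-of-resolutions construction of Lemma~\ref{lemma.moduletensor.projresol} combined with the \textbf{Ext}-algebra identification of Theorem~\ref{theorem.moduleext}.
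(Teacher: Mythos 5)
The paper supplies no proof of its own for this proposition: it cites \cite{MR1749892} for the untwisted case and asserts that the twisted, graded version follows in the same way, so there is nothing in the text to compare against directly. Your reconstruction assembles the ingredients one would expect: Lemma~\ref{lemma.twistedendo} to rewrite $E := \End_{\Lambda \otimes_k^t \Gamma}(M \otimes_k^t N)$ as $\End_\Lambda(M) \otimes_k^t \End_\Gamma(N)$, Lemma~\ref{lemma.moduletensor.projresol} applied to these algebras for the upper bound, Theorem~\ref{theorem.moduleext} for the lower bound, and the identification $D(\Lambda \otimes_k^t \Gamma) \cong D\Lambda \otimes_k^t D\Gamma$ (with the twist scalar inserted) for the cogenerator half of the first claim. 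That identification is the right move, though you should also note that since the twisted tensor construction requires gradings, you need the summands $\Lambda \in \add M$, $D\Lambda \in \add M$ (and likewise for $\Gamma$, $N$) to be realized by graded splittings; this is available by graded Krull--Schmidt for finitely generated graded modules.

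The structural claim you flag --- $\Rad E = R$ and the classification of simple $E$-modules as outer tensor products $S \otimes_k^t T$ --- is not a loose end to be discharged under ``some splitting hypothesis''; it is the actual content of the upper bound, and without a separability assumption the equality in the proposition can genuinely fail. This is already visible in the untwisted, ungraded case: for a purely inseparable field extension $L/k$ with $\Lambda = \Gamma = M = N = L$, one has $\gld \End_\Lambda(M) = 0$ but $\End_{L \otimes_k L}(L \otimes_k L) = L \otimes_k L$ has infinite global dimension. So the proposition carries an implicit hypothesis (separability over $k$ of the semisimple quotients of $\End_\Lambda(M)$ and $\End_\Gamma(N)$, or a splitting assumption) which a correct proof must state; in the paper's intended applications to quantum complete intersections the relevant quotients are all $k$, so nothing breaks there, but in the generality of the statement the hypothesis is needed. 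Granted separability, your argument does go through: $R$ is nilpotent, its quotient is a twisted tensor product of semisimple algebras which is semisimple because the $k^\times$-valued twist preserves the underlying $k$-vector space and ideal filtration, and the K\"unneth identification of Theorem~\ref{theorem.moduleext} then yields both inequalities. One should also verify that the simples used in the lower bound can be taken graded (again graded Krull--Schmidt), since Theorem~\ref{theorem.moduleext} is formulated for graded modules.
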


\section{Tensor products of bimodules}

Throughout this section, we keep the notation from the last
section. That is, we fix two abelian groups $A$ and $B$, together
with an $A$-graded algebra $\Lambda$ and a $B$-graded algebra
$\Gamma$. Moreover, we fix a homomorphism $t: A
\otimes_{\mathbb{Z}} B \rTo k^{\times}$ of abelian groups.
Given an algebra $\Delta$, we denote by $\Delta\env$ its
enveloping algebra $\Delta \otimes_k \Delta^{\op}$. Note that if
$\Delta$ is $G$-graded, where $G$ is some abelian group, then so
is $\Delta\env$, and $\Delta$ is a graded $\Delta\env$-module.

\begin{defconst}
Given modules $X \in \Lambda\env\mod\gr$ and $Y \in
\Gamma\env\mod\gr$, the tensor product $X \otimes_k Y$ becomes a
graded $(\Lambda \otimes_k^t \Gamma)\env$-module by defining
$$(\lambda \otimes \gamma) (x \otimes y) (\lambda^{\prime} \otimes
\gamma^{\prime}) \stackrel{\text{def}}{=} \tw{x}{\gamma}
\tw{\lambda^{\prime}}{y} \tw{\lambda^{\prime}}{\gamma} \lambda x
\lambda^{\prime} \otimes \gamma y \gamma^{\prime}.$$ We denote
this bimodule by $X \otimes_k^t Y$.
\end{defconst}

\begin{remark}
In general the graded $(\Lambda \otimes_k^t \Gamma)\env$-modules
$X\ds{a} \otimes_k^t Y\ds{b}$ and $(X \otimes_k^t Y) \ds{a,b}$ are
not isomorphic.
\end{remark}

The following results are analogues of Lemmas~\ref{lemma.tensorproj},
\ref{lemma.tensorprodmorph} and
\ref{lemma.moduletensor.projresol}. We prove only the first result, as the proofs of the other two results are more or less the same as those of Lemmas~\ref{lemma.tensorprodmorph} and
\ref{lemma.moduletensor.projresol}.

\begin{lemma}
Given modules $X \in \Lambda\env\proj\gr$ and $Y \in \Gamma\env\proj\gr$, the tensor product $X \otimes_k^t Y$ is a graded projective $(\Lambda \otimes_k^t \Gamma)\env$-module.
\end{lemma}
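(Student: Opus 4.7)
The plan is to follow the reduction strategy of Lemma~\ref{lemma.tensorproj}. Since $-\otimes_k^t-$ is $k$-bilinear and therefore preserves finite direct sums in each variable, and since direct summands of graded projective modules are again graded projective, it suffices to handle the case where $X = \Lambda\env\ds{a_1,a_2}$ and $Y = \Gamma\env\ds{b_1,b_2}$ for some $a_1,a_2 \in A$ and $b_1,b_2 \in B$: every finitely generated graded projective bimodule is a direct summand of a finite direct sum of such shifted regular bimodules.

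The key step is then to construct, in the unshifted case, an explicit graded $(\Lambda \otimes_k^t \Gamma)\env$-module isomorphism
\[
\Phi \colon \Lambda\env \otimes_k^t \Gamma\env \rTo (\Lambda \otimes_k^t \Gamma)\env,
\]
exploiting the fact that both sides have underlying $k$-vector space $\Lambda \otimes_k \Lambda \otimes_k \Gamma \otimes_k \Gamma$ up to reordering of tensor factors. I would define $\Phi$ by sending $(\lambda_1 \otimes \lambda_2) \otimes (\gamma_1 \otimes \gamma_2)$ to $c\cdot(\lambda_1 \otimes \gamma_1) \otimes (\lambda_2 \otimes \gamma_2)$, where $c \in k^\times$ is a scalar depending on the degrees $|\lambda_i|,|\gamma_j|$ through the homomorphism $t$, chosen to compensate for the factors $\tw{x}{\gamma}\tw{\lambda'}{y}\tw{\lambda'}{\gamma}$ appearing in the bimodule action on the left versus the twists produced by the multiplication in $\Lambda \otimes_k^t \Gamma$ on the right. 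Checking that $\Phi$ is equivariant then boils down to the bilinearity of $t \colon A \otimes_{\mathbb{Z}} B \to k^\times$ applied to the relevant combinations of degrees.

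For the general shifted case, the same recipe yields an isomorphism of $\Lambda\env\ds{a_1,a_2} \otimes_k^t \Gamma\env\ds{b_1,b_2}$ with \emph{some} shift of $(\Lambda \otimes_k^t \Gamma)\env$---in general, not the naive shift $\ds{(a_1,b_1),(a_2,b_2)}$, which is exactly the phenomenon pointed out in the preceding remark. Any shift of the regular bimodule is of course graded projective, completing the argument.

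The main obstacle I anticipate is the bookkeeping: the three twist factors $\tw{x}{\gamma}$, $\tw{\lambda'}{y}$, $\tw{\lambda'}{\gamma}$ in the definition of the bimodule structure on $X \otimes_k^t Y$ are placed precisely so that the natural identification above respects the left and right actions simultaneously, but pinning down the correct scalar $c$ and tracking how it transforms under shifts requires a careful calculation rather than a deep idea. I expect no conceptual surprises, only patience.
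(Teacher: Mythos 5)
Your proposal follows essentially the same route as the paper: reduce to shifted copies of the regular bimodules and exhibit an explicit isomorphism $(\Lambda \otimes_k^t \Gamma)\env\ds{a,b} \cong \Lambda\env\ds{a} \otimes_k^t \Gamma\env\ds{b}$ given by a twist-compensating scalar, which the paper records as $\tw{l^{\prime}}{g}\tw{a}{g}\tw{l^{\prime}}{b}$. The only small discrepancy is notational: the paper grades $\Lambda\env$-modules by $A$ (so shifts are indexed by a single $a \in A$, not a pair), and with that convention the shift you obtain is in fact the naive one $\ds{a,b}$.
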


\begin{proof}
It suffices to show that $\Lambda\env\ds{a} \otimes_k^t
\Gamma\env\ds{b}$ is graded projective for any $a \in A$ and $b \in B$. This can be seen by noting that the map
\begin{align*}
(\Lambda \otimes_k^t \Gamma)\env\ds{a,b} & \rTo \Lambda\env\ds{a}
\otimes_k^t \Gamma\env\ds{b} \\ (l \otimes g) \otimes (l^{\prime}
\otimes g^{\prime}) & \rMapsto \tw{l^{\prime}}{g} \tw{a}{g}
\tw{l^{\prime}}{b} (l \otimes l^{\prime}) \otimes (g \otimes
g^{\prime})
\end{align*}
 is an isomorphism of graded $( \Lambda \otimes_k^t \Gamma )\env$-modules.
\end{proof}

\begin{lemma}
Given modules $X, X^{\prime} \in \Lambda\env\mod\gr$ and $Y, Y^{\prime} \in
\Gamma\env\mod\gr$, the natural map
\[ \grHom_{\Lambda\env}(X, X^{\prime}) \otimes_k
\grHom_{\Gamma\env}(Y, Y^{\prime}) {\rTo} \grHom_{(\Lambda
\otimes_k^t \Gamma)\env}(X \otimes_k^t Y, X^{\prime} \otimes_k^t
Y^{\prime}) \] is an isomophism.
\end{lemma}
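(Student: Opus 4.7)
The plan is to mirror the proof of Lemma~\ref{lemma.tensorprodmorph} almost verbatim: first handle the base case where $X$ and $Y$ are shifts of the free bimodules $\Lambda\env$ and $\Gamma\env$, then extend to arbitrary bimodules by a right-exactness argument.

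For the base case $X = \Lambda\env\ds{a}$ and $Y = \Gamma\env\ds{b}$, I would use the standard identification $\grHom_{\Lambda\env}(\Lambda\env\ds{a}, X') = X'_{-a}$ and the analogous one for $Y'$, so that the left-hand side evaluates to $X'_{-a} \otimes_k Y'_{-b}$. For the right-hand side, I would invoke the isomorphism $(\Lambda \otimes_k^t \Gamma)\env\ds{a,b} \iso \Lambda\env\ds{a} \otimes_k^t \Gamma\env\ds{b}$ furnished by the preceding lemma, yielding
\[ \grHom_{(\Lambda \otimes_k^t \Gamma)\env}(\Lambda\env\ds{a} \otimes_k^t \Gamma\env\ds{b}, X' \otimes_k^t Y') = (X' \otimes_k^t Y')_{-a,-b} = X'_{-a} \otimes_k Y'_{-b}. \]
Chasing the twist scalars through these identifications shows that the natural map is the identity on $X'_{-a} \otimes_k Y'_{-b}$.

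To extend to arbitrary $X$ and $Y$, I would choose presentations $P_1 \to P_0 \to X \to 0$ and $Q_1 \to Q_0 \to Y \to 0$ with each $P_i$ a direct sum of shifts of $\Lambda\env$ and each $Q_j$ a direct sum of shifts of $\Gamma\env$. Since $\otimes_k$ over a field is exact, the sequence $(P_1 \otimes_k^t Q_0) \oplus (P_0 \otimes_k^t Q_1) \to P_0 \otimes_k^t Q_0 \to X \otimes_k^t Y \to 0$ is exact in $(\Lambda \otimes_k^t \Gamma)\env\mod\gr$. Applying $\grHom(-, X' \otimes_k^t Y')$ on the right-hand side, and on the left-hand side tensoring together the two left-exact sequences obtained from $\grHom_{\Lambda\env}(-,X')$ and $\grHom_{\Gamma\env}(-,Y')$, produces a commutative diagram to which a five-lemma style chase applies, since the base case is already known for the terms $P_i \otimes_k^t Q_j$.

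The main obstacle is the bookkeeping in the base case: the isomorphism of the previous lemma carries the triple twist factor $\tw{l'}{g} \tw{a}{g} \tw{l'}{b}$, so one must verify that, after composing with the identifications on both sides, the canonical map $\varphi \otimes \psi \mapsto \varphi \otimes_k^t \psi$ really does correspond to the identity $X'_{-a} \otimes_k Y'_{-b} \to X'_{-a} \otimes_k Y'_{-b}$, with the various twist scalars cancelling rather than producing a spurious constant. Once this bookkeeping is dispatched, everything else is a direct transcription of the argument for Lemma~\ref{lemma.tensorprodmorph}.
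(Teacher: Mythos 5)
Your proposal matches the paper's intended argument: the paper explicitly states that the proof is ``more or less the same'' as that of Lemma~\ref{lemma.tensorprodmorph}, which proceeds by checking on shifts of the free (bi)module and then passing to cokernels, exactly as you do. You have also correctly identified the one genuine subtlety, namely that the naive shift lemma fails for bimodules but the preceding lemma nevertheless supplies an isomorphism $(\Lambda \otimes_k^t \Gamma)\env\ds{a,b} \iso \Lambda\env\ds{a} \otimes_k^t \Gamma\env\ds{b}$ (with its extra twist factors, which all evaluate to $1$ on the generator $(1\otimes 1)\otimes(1\otimes 1)$), so your bookkeeping concern resolves cleanly and the base-case identification really is the identity.
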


\begin{lemma}
Given modules $X \in \Lambda\env\mod\gr$ and $Y \in \Gamma\env\mod\gr$ with graded projective bimodule resolutions
\begin{align*}
& \mathbb{P}: \cdots \rTo P_i \rTo \cdots \rTo P_1 \rTo P_0 \rTo X \rTo 0, \\
& \mathbb{Q}: \cdots \rTo Q_i \rTo \cdots \rTo Q_1 \rTo Q_0 \rTo Y \rTo 0,
\end{align*}
the total complex of $\mathbb{P} \otimes_k^t \mathbb{Q}$ is a
graded projective bimodule resolution of $X \otimes_k^t Y$.
\end{lemma}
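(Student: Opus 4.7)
The plan is to follow the same two-step pattern used in the proof of Lemma~\ref{lemma.moduletensor.projresol}, since the bimodule setting differs from the one-sided setting only by the more elaborate twist in the action but not in anything that affects the exactness or projectivity arguments.

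First I would deal with projectivity of the terms. Each term of the total complex $\Tot(\mathbb{P} \otimes_k^t \mathbb{Q})$ in homological degree $n$ is the finite direct sum $\bigoplus_{i+j=n} P_i \otimes_k^t Q_j$. By the preceding lemma (projectivity of twisted tensor products of graded projective bimodules), every summand $P_i \otimes_k^t Q_j$ is graded projective over $(\Lambda \otimes_k^t \Gamma)\env$, and a finite direct sum of projectives is projective. Hence every term of the total complex is graded projective.

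Next I would argue exactness above degree $0$ and identify the $0$-th homology with $X \otimes_k^t Y$. Since $k$ is a field, tensoring a complex over $k$ with a complex over $k$ is exact in the sense that if $\mathbb{P} \rTo X \rTo 0$ and $\mathbb{Q} \rTo Y \rTo 0$ are resolutions of $k$-vector spaces, then the total complex of $\mathbb{P} \otimes_k \mathbb{Q}$ is a resolution of $X \otimes_k Y$ (this is the classical K\"unneth argument, and it only uses the underlying $k$-linear structure, not the twist). Since $\otimes_k^t$ agrees with $\otimes_k$ as functors of $k$-vector spaces --- the twist $t$ modifies only the bimodule action, not the underlying vector space structure --- the total complex is exact in positive degrees, and its $0$-th cohomology is $X \otimes_k Y = X \otimes_k^t Y$ as a $k$-vector space. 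Combined with the observation that all the differentials in $\Tot(\mathbb{P} \otimes_k^t \mathbb{Q})$ are maps of $(\Lambda \otimes_k^t \Gamma)\env$-modules, this gives a graded projective bimodule resolution of $X \otimes_k^t Y$.

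I do not expect any genuine obstacle: the whole content is that the twist is invisible to the underlying $k$-vector space, so both the K\"unneth-type exactness and the identification of the augmentation module carry over unchanged from the one-sided case. The only bookkeeping to keep straight would be checking that the augmentation $P_0 \otimes_k^t Q_0 \rTo X \otimes_k^t Y$ induced by the two augmentations of $\mathbb{P}$ and $\mathbb{Q}$ really is a bimodule map with the twisted action, but this is immediate from the definition of the bimodule twisted tensor product applied to homogeneous elements.
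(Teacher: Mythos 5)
Your argument mirrors the paper's: the paper explicitly says this lemma is proved "more or less the same" as Lemma~\ref{lemma.moduletensor.projresol}, namely projectivity of the terms from the preceding bimodule projectivity lemma, and exactness from the K\"unneth argument over the field $k$. Your proposal is correct and essentially the same proof, merely spelled out in more detail.
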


Now note that for a fixed $b \in B$ the map $t$ induces a morphism
$\tw{-}{b}: A \rTo k^{\times}$ (and similarly for a fixed $a \in A$).
With this notation, we make the following observation.

\begin{lemma} \label{lemma.bimodshiftok}
Let $a^{\prime} \in \cap_{b \in B} \Ker \tw{-}{b} \leq A$ and
$b^{\prime} \in \cap_{a \in A} \Ker \tw{a}{-} \leq B$. Then the map
\begin{align*}
\Lambda\ds{a^{\prime}} \otimes_k^t \Gamma\ds{b^{\prime}} & \rTo
(\Lambda \otimes_k^t \Gamma)\ds{a,b}\\ \lambda \otimes \gamma &
\rMapsto \lambda \otimes \gamma
\end{align*}
is an isomorphism of graded $(\Lambda \otimes_k^t \Gamma)\env$-modules.
\end{lemma}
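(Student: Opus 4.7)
The plan is to verify directly that the identity on the underlying vector space $\Lambda \otimes_k \Gamma$ is degree-preserving, bijective, and compatible with the $(\Lambda \otimes_k^t \Gamma)\env$-module structure on both sides. Bijectivity is automatic. The grading check is immediate from the definitions of the two shifts: the $(c,d)$-component on the left is $\Lambda\ds{a'}_c \otimes_k \Gamma\ds{b'}_d = \Lambda_{c-a'} \otimes_k \Gamma_{d-b'}$, which coincides with the $(c,d)$-component of $(\Lambda \otimes_k^t \Gamma)\ds{a',b'}$.

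The real content is checking bimodule-linearity. I would compute the action of a homogeneous $(\lambda \otimes \gamma) \otimes (\lambda' \otimes \gamma') \in (\Lambda \otimes_k^t \Gamma)\env$ on a homogeneous $x \otimes y$ on both sides. Applying the bimodule tensor product construction with $X = \Lambda\ds{a'}$ and $Y = \Gamma\ds{b'}$ produces the twist $\tw{x}{\gamma} \tw{\lambda'}{y} \tw{\lambda'}{\gamma}$, but here the degrees of $x$ and $y$ are measured in the shifted bimodules, hence equal $|x|_\Lambda + a'$ and $|y|_\Gamma + b'$. Bilinearity of $t$ then expands this into $\tw{x}{\gamma} \tw{a'}{\gamma} \tw{\lambda'}{y} \tw{\lambda'}{b'} \tw{\lambda'}{\gamma}$ in terms of the original degrees. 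The right-hand action, computed by the iterated product $(\lambda \otimes \gamma) \cdot^t (x \otimes y) \cdot^t (\lambda' \otimes \gamma')$ in $\Lambda \otimes_k^t \Gamma$, straightforwardly evaluates to $\tw{x}{\gamma} \tw{\lambda'}{y} \tw{\lambda'}{\gamma} \lambda x \lambda' \otimes \gamma y \gamma'$.

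The only real obstacle is reconciling these two expressions, and this is exactly where the kernel hypotheses enter: $a' \in \bigcap_{b \in B} \Ker \tw{-}{b}$ forces $\tw{a'}{\gamma} = 1$ for every homogeneous $\gamma \in \Gamma$, and symmetrically $\tw{\lambda'}{b'} = 1$ for every homogeneous $\lambda' \in \Lambda$. The two discrepant twist factors therefore disappear, the two actions coincide, and the identity map is an isomorphism of graded $(\Lambda \otimes_k^t \Gamma)\env$-modules. Beyond this degree-bookkeeping no further argument is needed; the essence of the lemma is that the kernel hypotheses are arranged precisely to kill the twists by which the two bimodule structures would otherwise differ.
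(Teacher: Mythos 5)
The paper states this lemma as an observation and gives no proof, so there is nothing to compare against; your direct verification is exactly the computation being left implicit. The argument is correct: the degree bookkeeping in the shifted bimodules produces the extra factors $\tw{a'}{\gamma}$ and $\tw{\lambda'}{b'}$, and the kernel hypotheses on $a'$ and $b'$ are precisely what makes them trivial, so the identity map is a graded bimodule isomorphism. (You also correctly read $\ds{a,b}$ in the statement as the evident typo for $\ds{a',b'}$.)
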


Using the above notation, we now prove the main result of this
section. It shows that Hochschild cohomology commutes with twisted
tensor products, provided we only consider the graded parts
corresponding to the subgroups $\cap_{b \in B} \Ker \tw{-}{b} \leq
A$ and $\cap_{a \in A} \Ker \tw{a}{-} \leq B$.

\begin{theorem} \label{theorem.hochschild}
Let $A^{\prime} = \cap_{b \in B} \Ker \tw{-}{b} \leq A$ and
$B^{\prime} = \cap_{a \in A} \Ker \tw{a}{-} \leq B$. Then there is an
isomorphism
\[ \HH^{*,A^{\prime}}(\Lambda) \otimes_k^{(-1)^{**}}
\HH^{*,B^{\prime}}(\Gamma) \rTo \HH^{*,A^{\prime} \oplus
B^{\prime}}(\Lambda \otimes_k^t \Gamma), \] where $(-1)^{**}$
denotes the morphism mapping $((i, a^{\prime}), (j, b^{\prime}))$ to
$(-1)^{ij}$.
\end{theorem}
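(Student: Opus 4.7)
The approach is to mimic the proof of Theorem~\ref{theorem.moduleext} but with the enveloping algebras in place of the original algebras. Recall that for any graded algebra $\Delta$ one has $\HH^{i,g}(\Delta) = \Ext^i_{\Delta\env}(\Delta, \Delta\ds{g})$, so the left hand side of the desired isomorphism is
\[ \HH^{*, A' \oplus B'}(\Lambda \otimes_k^t \Gamma) = \bigoplus_{(a',b') \in A' \oplus B'} \Ext^*_{(\Lambda \otimes_k^t \Gamma)\env}\bigl(\Lambda \otimes_k^t \Gamma,\, (\Lambda \otimes_k^t \Gamma)\ds{a',b'}\bigr). \]

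The plan is as follows. First, for each $(a',b') \in A' \oplus B'$, use Lemma~\ref{lemma.bimodshiftok} to replace the shifted bimodule $(\Lambda \otimes_k^t \Gamma)\ds{a',b'}$ by $\Lambda\ds{a'} \otimes_k^t \Gamma\ds{b'}$. Then choose graded projective bimodule resolutions $\mathbb{P}$ of $\Lambda$ and $\mathbb{Q}$ of $\Gamma$; the bimodule analog of Lemma~\ref{lemma.moduletensor.projresol} asserts that $\Tot(\mathbb{P} \otimes_k^t \mathbb{Q})$ is a graded projective bimodule resolution of $\Lambda \otimes_k^t \Gamma$. Combining this with the bimodule analog of Lemma~\ref{lemma.tensorprodmorph} and the exactness of $\otimes_k$, one carries out the same four-line calculation as in the proof of Theorem~\ref{theorem.moduleext} to obtain
\[ \Ext^*_{(\Lambda \otimes_k^t \Gamma)\env}\bigl(\Lambda \otimes_k^t \Gamma,\, \Lambda\ds{a'} \otimes_k^t \Gamma\ds{b'}\bigr) \cong \Ext^*_{\Lambda\env}(\Lambda, \Lambda\ds{a'}) \otimes_k^{\tilde{t}} \Ext^*_{\Gamma\env}(\Gamma, \Gamma\ds{b'}), \]
with the same twist $\tilde{t}((i,a),(j,b)) = (-1)^{ij} \tw{a}{b}$ as in Theorem~\ref{theorem.moduleext}.

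Summing over $(a',b') \in A' \oplus B'$ produces an isomorphism of graded vector spaces $\HH^{*,A' \oplus B'}(\Lambda \otimes_k^t \Gamma) \cong \HH^{*,A'}(\Lambda) \otimes_k^{\tilde{t}} \HH^{*,B'}(\Gamma)$. The key final observation is that the twist $t$ is trivial on $A' \otimes B'$ by the very definition of these subgroups, so $\tw{a'}{b'} = 1$ whenever $a' \in A'$ and $b' \in B'$; consequently $\tilde{t}$ restricts on $(\mathbb{Z} \oplus A') \otimes (\mathbb{Z} \oplus B')$ to the pure sign morphism $(-1)^{**}$ appearing in the statement. The most delicate point, as in Theorem~\ref{theorem.moduleext}, will be verifying that this identification respects the multiplicative (Yoneda) structure rather than only the underlying vector space structure: the Yoneda product on Hochschild cohomology corresponds to splicing of bimodule extensions, and one must track the Koszul signs introduced by totalizing the double complex together with the shift isomorphism of Lemma~\ref{lemma.bimodshiftok}. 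The triviality of $t$ on $A' \oplus B'$ is precisely what guarantees that no extra scalars arise in this compatibility check, reducing the ring structure on the right to the pure K\"unneth twist by $(-1)^{**}$.
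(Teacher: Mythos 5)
Your proposal follows exactly the paper's own proof: pass from $(\Lambda\otimes_k^t\Gamma)\ds{a',b'}$ to $\Lambda\ds{a'}\otimes_k^t\Gamma\ds{b'}$ via Lemma~\ref{lemma.bimodshiftok}, tensor graded projective bimodule resolutions and totalize, apply the bimodule analogue of Lemma~\ref{lemma.tensorprodmorph} as in Theorem~\ref{theorem.moduleext}, and observe that $\tilde t$ degenerates to $(-1)^{**}$ because $t$ is trivial on $A'\otimes B'$. The only cosmetic difference is the order (you apply the shift identification before computing the cohomology, the paper after), and you correctly flag that the ring structure must be checked through the Koszul signs, which the paper handles by the same remark as in Theorem~\ref{theorem.moduleext}.
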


\begin{proof}
Let $\mathbb{P}$ and $\mathbb{Q}$ be graded bimodule projective resolutions
of $\Lambda$ and $\Gamma$,
respectively. Given $a \in A$ and $b \in B$, the same arguments as
in the proof of Theorem~\ref{theorem.moduleext} give
\begin{align*}
\HH^{*,a,b}(\Lambda \otimes_k^t \Gamma) & = H^*(\grHom_{(\Lambda
\otimes_k^t \Gamma)\env}(\Tot(\mathbb{P} \otimes_k^t \mathbb{Q},
(\Lambda \otimes_k^t \Gamma)\ds{a,b}))) \\ & =
H^*(\Tot(\Hom_{(\Lambda \otimes_k^t \Gamma)\env}(\mathbb{P}
\otimes_k^t \mathbb{Q}, (\Lambda \otimes_k^t \Gamma)\ds{a,b})))
\intertext{and} & \hspace{-1cm} H^*(\Tot(\Hom_{(\Lambda \otimes_k^t
\Gamma)\env}(\mathbb{P} \otimes_k^t \mathbb{Q}, \Lambda\ds{a}
\otimes_k^t \Gamma\ds{b}))) \\
& = H^*(\Tot( \Hom_{\Lambda\env}(\mathbb{P}, \Lambda\ds{a})
\otimes_k^t \Hom_{\Gamma\env}(\mathbb{Q}, \Gamma))) \\ & =
\HH^{*,a}(\Lambda) \otimes_k^{\tilde{t}} \HH^{*,b}(\Gamma),
\end{align*}
where $\tilde{t} = (-1)^{**} \cdot t$ as in
Theorem~\ref{theorem.moduleext}. Now if $a \in A^{\prime}$ and $b \in
B^{\prime}$, then from
Lemma~\ref{lemma.bimodshiftok} we see that we may identify
$$H^*(\Tot(\Hom_{(\Lambda \otimes_k^t \Gamma)\env}(\mathbb{P}
\otimes_k^t \mathbb{Q}, (\Lambda \otimes_k^t \Gamma)\ds{a,b})))$$
with
$$H^*(\Tot(\Hom_{(\Lambda \otimes_k^t
\Gamma)\env}(\mathbb{P} \otimes_k^t \mathbb{Q}, \Lambda\ds{a}
\otimes_k^t \Gamma\ds{b}))).$$
Finally, note that
$\HH^{*,A^{\prime}}(\Lambda) \otimes_k^{\tilde{t}}
\HH^{*,B^{\prime}}(\Gamma) = \HH^{*,A^{\prime}}(\Lambda)
\otimes_k^{(-1)^{**}} \HH^{*,B^{\prime}}(\Gamma)$, since all degrees
occurring are in the kernel of $t$.
\end{proof}

We end this section with the following corollary to Theorem~\ref{theorem.hochschild}. It shows that, given certain conditions, if $\Lambda$ and $\Gamma$ satisfy {\bf{Fg}}, then so does $\Lambda \otimes_k^t \Gamma$.

\begin{corollary} \label{corollary.finindexok}
With the same notation as in Theorem~\ref{theorem.hochschild},
assume $\Lambda$ and $\Gamma$ satisfy {\bf{Fg}} with respect to
their even Hochschild cohomolgy rings $\HH^{2*} ( \Lambda )$ and $\HH^{2*} ( \Gamma )$. Moreover, suppose $[A\!:\!A^{\prime}]$ and
$[B\!:\!B^{\prime}]$ are finite, and that $\Lambda / \Rad \Lambda$ and
$\Gamma / \Rad \Gamma$ are separable over $k$. Then $\Lambda
\otimes_k^t \Gamma$ satisfies {\bf{Fg}} with respect its even
Hochschild cohomolgy ring $\HH^{2*} ( \Lambda \otimes_k^t \Gamma )$.
\end{corollary}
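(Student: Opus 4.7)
The plan is to apply both parts of Remark~\ref{remark.simplesenough}: I reduce to showing finite generation of $\Ext^*_{\Lambda \otimes_k^t \Gamma}(S,S)$ over a suitable subalgebra of $\HH^{2*}(\Lambda \otimes_k^t \Gamma)$, where $S = (\Lambda \otimes_k^t \Gamma) / \Rad (\Lambda \otimes_k^t \Gamma)$. The first task is to identify $S$ concretely. The ideal $\Rad \Lambda \otimes_k \Gamma + \Lambda \otimes_k \Rad \Gamma$ is nilpotent in $\Lambda \otimes_k^t \Gamma$, and the quotient is the twisted tensor product $(\Lambda / \Rad \Lambda) \otimes_k^t (\Gamma / \Rad \Gamma)$ of two separable semisimple $k$-algebras; since twisting the multiplication by scalars in $k^\times$ does not affect the Jacobson radical of a graded algebra, this quotient remains semisimple, so the ideal equals $\Rad (\Lambda \otimes_k^t \Gamma)$.

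Next I would produce the candidate subalgebra. Restricting the isomorphism of Theorem~\ref{theorem.hochschild} to even cohomological degree kills the sign twist $(-1)^{**}$, and on $A' \oplus B'$ the original twist $t$ is trivial by definition. Hence
\[
R \;:=\; \HH^{2*,A'}(\Lambda) \otimes_k \HH^{2*,B'}(\Gamma)
\]
embeds as a commutative graded $k$-subalgebra of $\HH^{2*,A' \oplus B'}(\Lambda \otimes_k^t \Gamma) \subseteq \HH^{2*}(\Lambda \otimes_k^t \Gamma)$. To see $R$ is Noetherian of finite type, I note that a homogeneous element $h \in \HH^{2*}(\Lambda)$ of $A$-degree $a$ satisfies $h^{[A:A']} \in \HH^{2*,A'}(\Lambda)$, since $[A:A']\cdot a \in A'$. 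Thus every element of the finitely generated commutative $k$-algebra $\HH^{2*}(\Lambda)$ is integral over $\HH^{2*,A'}(\Lambda)$, making the former module-finite over the latter; Eakin--Nagata then gives Noetherianity of $\HH^{2*,A'}(\Lambda)$, and finite type is inherited from $\HH^{2*}(\Lambda)$. The same argument treats $\Gamma$, so $R$ is Noetherian of finite type.

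Finally, Theorem~\ref{theorem.moduleext} identifies
\[
\Ext^*_{\Lambda \otimes_k^t \Gamma}(S,S) \;=\; \Ext^*_{\Lambda}(\Lambda / \Rad \Lambda) \otimes_k^{\tilde t} \Ext^*_{\Gamma}(\Gamma / \Rad \Gamma),
\]
and by hypothesis each factor is finitely generated over its respective even Hochschild cohomology ring, hence over $\HH^{2*,A'}(\Lambda)$ and $\HH^{2*,B'}(\Gamma)$ respectively by the module-finiteness above. Tracking the embedding of Theorem~\ref{theorem.hochschild}, the action of $h \otimes h' \in R$ on $x \otimes y$ equals $c\, hx \otimes h' y$ for some nonzero scalar $c \in k^\times$ determined only by degrees, so the products $\{x_i \otimes y_j\}$ of finite generating sets on each side generate $\Ext^*_{\Lambda \otimes_k^t \Gamma}(S,S)$ over $R$. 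Invoking Remark~\ref{remark.simplesenough}(i) then yields \textbf{Fg} for all modules with respect to $R$, and Remark~\ref{remark.simplesenough}(ii) upgrades $R$ to the full even Hochschild cohomology ring $\HH^{2*}(\Lambda \otimes_k^t \Gamma)$. The main obstacle is the careful tracking of the various twists and signs: one must confirm that the scalar twist does not destroy the semisimplicity of $S$, and that, although the $R$-action on $\Ext^*(S,S)$ is twisted by elements of $k^\times$, the twist is harmless for the purpose of exhibiting a finite generating set.
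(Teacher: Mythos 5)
Your proposal follows essentially the same route as the paper's own proof: reduce to the $(A'\oplus B')$-graded pieces using module-finiteness of $\HH^{2*,A}(\Lambda)$ over $\HH^{2*,A'}(\Lambda)$ (and likewise for $\Gamma$), combine Theorems~\ref{theorem.moduleext} and~\ref{theorem.hochschild} to get finite generation of $\Ext^*_{\Lambda\otimes_k^t\Gamma}(S,S)$ over the tensor-product subring, use separability to identify $S$ with $(\Lambda\otimes_k^t\Gamma)/\Rad(\Lambda\otimes_k^t\Gamma)$, and conclude via Remark~\ref{remark.simplesenough}. The only substantive difference is that you flesh out a couple of steps the paper leaves implicit, notably the integrality/Eakin--Nagata argument for Noetherianity and finite type of $\HH^{2*,A'}(\Lambda)$, and the observation that the $k^\times$-valued cocycle twist preserves nilpotence of graded ideals so that the radical is unchanged.
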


\begin{proof}
\sloppy Since $[A\!:\!A^{\prime}]$ is finite, the algebra $\HH^{2*,
A}(\Lambda)$ is a finitely generated module over $\HH^{2*,
A^{\prime}}(\Lambda)$. Therefore, since $\Lambda$ satisfies
{\bf{Fg}}, we see that $\Ext^*_{\Lambda}(\Lambda / \Rad \Lambda,
\Lambda / \Rad \Lambda)$ is finitely generated over
$\HH^{2*,A^{\prime}}(\Lambda)$. The same arguments apply to
$\Gamma$, hence
$$\Ext^*_{\Lambda}(\Lambda / \Rad \Lambda, \Lambda / \Rad \Lambda) \otimes_k^t
\Ext^*_{\Gamma}(\Gamma / \Rad \Gamma, \Gamma / \Rad \Gamma)$$
is finitely generated over $\HH^{2*, A^{\prime}}(\Lambda) \otimes_k \HH^{2 *,B^{\prime}}(\Gamma)$. Then by Theorems~\ref{theorem.moduleext} and \ref{theorem.hochschild}, we see that
$$\Ext^*_{\Lambda \otimes_k^t \Gamma}(\Lambda / \Rad \Lambda \otimes_k^t \Gamma / \Rad \Gamma,
\Lambda / \Rad \Lambda \otimes_k^t \Gamma / \Rad \Gamma)$$
must be a finitely generated $\HH^{2*, A^{\prime} \oplus B^{\prime}}( \Lambda \otimes_k^t \Gamma )$-module. Finally, since $\Lambda / \Rad \Lambda$ and $\Gamma / \Rad \Gamma$ are separable over $k$, the equality
\[ \Lambda / \Rad \Lambda \otimes_k^t \Gamma / \Rad \Gamma =
( \Lambda \otimes_k^t \Gamma ) / \Rad ( \Lambda \otimes_k^t \Gamma )\] holds. The
claim now follows from Remarks~\ref{remark.simplesenough}.
\end{proof}

\section{Quantum complete intersections}

We now apply the cohomology theory of twisted tensor products to the class of finite dimensional algebras known as \emph{quantum complete intersections}. Throughout this section, fix integers $n \ge 1$ and $a_1, \dots, a_n \ge 2$, together with a nonzero element $q_{ij} \in k$ for every $1 \le i<j \le n$. We define the algebra $\Lambda$ by
$$\Lambda \stackrel{\text{def}}{=} k\left<x_1, \ldots, x_n\right>
/ ( x_i^{a_i}, x_j x_i - q_{ij} x_ix_j ),$$
a codimension $n$ quantum complete intersection in its most general form. This is a selfinjective algebra of dimension $\prod a_i$. We shall determine precisely when such an algebra satisfies {\bf{Fg}}, and consequently obtain a lower bound for its representation
dimension.

Note that $\Lambda$ is $\mathbb{Z}^n$ graded by $| x_i | \stackrel{\text{def}}{=} (0, \dots, 1, \dots 0)$, the
$i$th unit vector. In particular, we use the
$\mathbb{Z}$-grading $|x| = 1$ for the special case of a codimension one quantum complete intersection $k[x] / (x^a)$. The following
observation allows us to study the cohomology inductively, starting
with the well known case $k[x]/(x^a)$.

\begin{lemma}
Let $\Lambda^{\prime}$ be the subalgebra of $\Lambda$ generated by
$x_1, \ldots, x_{n-1}$. Then
\[ \Lambda = \Lambda^{\prime} \otimes_k^t k[x_n] / (x_n^{a_n}), \]
where $\tw{d_1, \ldots, d_{n-1}}{d_n} \stackrel{\emph{def}}{=}
\prod_{i=1}^{n-1} q_{in}^{d_i d_n}$.
\end{lemma}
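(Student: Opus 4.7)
The plan is to exhibit an explicit isomorphism of $k$-algebras
\begin{align*}
\Phi \colon \Lambda^{\prime} \otimes_k^t k[x_n]/(x_n^{a_n}) & \rTo \Lambda \\
\lambda \otimes x_n^d & \rMapsto \lambda x_n^d,
\end{align*}
and to verify that the twisted multiplication on the left-hand side matches the multiplication in $\Lambda$. As a first step I would note that the subalgebra $\Lambda^{\prime}$ is itself the codimension $n-1$ quantum complete intersection with parameters $q_{ij}$ for $i<j\leq n-1$, so the standard monomials $x_1^{d_1}\cdots x_{n-1}^{d_{n-1}}$ with $0 \le d_i < a_i$ form a $k$-basis. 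Combined with the basis $1, x_n, \dots, x_n^{a_n-1}$ of $k[x_n]/(x_n^{a_n})$, this shows that $\Phi$ sends a $k$-basis of the source bijectively onto the standard monomial basis of $\Lambda$; hence $\Phi$ is a $k$-linear isomorphism and also respects the $\mathbb Z^n$-grading.

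The substance of the proof is then to verify that $\Phi$ is multiplicative. On the twisted tensor product side, the product of two homogeneous elements is
\[ (\lambda \otimes x_n^d) \cdot^t (\lambda^{\prime} \otimes x_n^{d^{\prime}}) = \tw{\lambda^{\prime}}{d}\, \lambda \lambda^{\prime} \otimes x_n^{d+d^{\prime}}, \]
which maps under $\Phi$ to $\tw{\lambda^{\prime}}{d}\, \lambda \lambda^{\prime} x_n^{d+d^{\prime}}$. On the $\Lambda$ side one computes
\[ (\lambda x_n^d)(\lambda^{\prime} x_n^{d^{\prime}}) = \lambda\, (x_n^d \lambda^{\prime})\, x_n^{d^{\prime}}, \]
so the key calculation is to move $x_n^d$ past a monomial $\lambda^{\prime} = x_1^{d_1^{\prime}}\cdots x_{n-1}^{d_{n-1}^{\prime}}$ using the relations $x_n x_i = q_{in} x_i x_n$. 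Each time $x_n$ crosses a factor $x_i$ it produces a scalar $q_{in}$, so iterating gives
\[ x_n^d\, x_i^{d_i^{\prime}} = q_{in}^{d\, d_i^{\prime}}\, x_i^{d_i^{\prime}} x_n^d, \qquad x_n^d \lambda^{\prime} = \Bigl(\prod_{i=1}^{n-1} q_{in}^{d_i^{\prime} d}\Bigr)\, \lambda^{\prime} x_n^d = \tw{\lambda^{\prime}}{d}\, \lambda^{\prime} x_n^d, \]
by the very definition of $t$. Combining these gives the desired identity.

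The main (and only real) obstacle is this commutation calculation: one must check that the scalars picked up when commuting $x_n^d$ through $\lambda^{\prime}$ reassemble to exactly $\tw{\lambda^{\prime}}{d} = \prod_{i=1}^{n-1} q_{in}^{d_i^{\prime} d}$, so that the twist recorded in the tensor product matches what the relations of $\Lambda$ force. Once this is in hand, extending by $k$-bilinearity from monomials to all of $\Lambda^{\prime}$ and $k[x_n]/(x_n^{a_n})$ finishes the verification, and together with the bijectivity already noted this shows that $\Phi$ is an isomorphism of $(A\oplus B)$-graded $k$-algebras.
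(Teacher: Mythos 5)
The paper states this lemma without proof, treating it as an immediate consequence of the defining relations of $\Lambda$. Your argument is exactly the expected direct verification: identify the underlying vector spaces via the standard monomial bases (using that $\Lambda'$ is itself a codimension $n-1$ quantum complete intersection with PBW basis), and then check multiplicativity by commuting $x_n^d$ past a monomial $\lambda'$ in $\Lambda'$, which produces precisely the scalar $\tw{\lambda'}{d} = \prod_{i=1}^{n-1} q_{in}^{d_i' d}$ prescribed by the twist. This is correct and matches what the paper intends.
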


As for quantum complete intersections of codimension one, that is,
truncated polynomial algebras, their cohomology is well known. We
record this in the following lemma.

\begin{lemma} \label{lemma.kxmodxn}
If $\Gamma = k[x] / (x^a)$, then
\begin{itemize}
\item[(1)] $\HH^{2*}(\Gamma) = k[x,z] / (x^a, ax^{a-1}z)$,
\item[(2)] $\Ext^*_{\Gamma}(k,k) = \left\{ \begin{matrix} k[y,z]
/(y^2 = z) & \text{ if } a = 2 \\ k[y,z] / (y^2) & \text{ if } a
\neq 2 \end{matrix} \right.$,
\end{itemize}
with $|x| = 0$, $|y| = 1$ and $|z| = 2$. In particular, the algebra
$\Gamma$ satisfies {\bf{Fg}} with respect to its even Hochschild
cohomology ring.
\end{lemma}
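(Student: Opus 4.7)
The plan is to exhibit explicit $2$-periodic resolutions, compute cohomology as a graded vector space from the resulting cochain complexes, and then identify the ring structure via chain-level lifts. The main subtleties will be the Yoneda product computations.

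For part (2), I will use the standard minimal projective resolution of $k$ as a $\Gamma$-module,
\[ \cdots \rTo \Gamma \rTo \Gamma \rTo \Gamma \rTo \Gamma \rTo k \rTo 0, \]
where the differential is multiplication by $x$ in odd positions and by $x^{a-1}$ in even positions (obtained by alternating the kernels $x \Gamma$ and $x^{a-1} \Gamma$). Since both $x$ and $x^{a-1}$ annihilate $k$, applying $\Hom_{\Gamma}(-, k)$ kills all differentials, so $\Ext^i_{\Gamma}(k,k) \iso k$ in every degree; call the natural generators $y \in \Ext^1$ and $z \in \Ext^2$. To compute $y^2$ I would lift $y$ to a chain endomorphism: the commutation conditions force this lift to alternate between multiplication by $1$ (in odd degree) and by $x^{a-2}$ (in even degree). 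Hence $y^2$ is represented by the map $\Gamma \to k$ sending $1$ to $\overline{x^{a-2}}$, which is the unit of $k$ when $a = 2$ (yielding $y^2 = z$) and zero when $a \geq 3$ (yielding $y^2 = 0$).

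For part (1), I will use the standard $\Gamma\env$-bimodule resolution
\[ \cdots \rTo \Gamma\env \rTo \Gamma\env \rTo \Gamma\env \rTo \Gamma \rTo 0, \]
with odd differential $u = x \otimes 1 - 1 \otimes x$ and even differential $v = \sum_{i+j = a-1} x^i \otimes x^j$. Under the canonical identification $\Hom_{\Gamma\env}(\Gamma\env, \Gamma) = \Gamma$, the map $u$ acts as zero (since $\Gamma$ is commutative) and $v$ acts as multiplication by $a x^{a-1}$. Hence $\HH^0 = \Gamma$ and $\HH^{2i}(\Gamma) = \Gamma / (a x^{a-1} \Gamma)$ for $i \geq 1$, which matches $k[x,z] / (x^a, a x^{a-1} z)$ in each even degree. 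For the ring structure, multiplication by $z$ is realised by the identity chain map on the period-two shift of the resolution, so $z^i$ is represented by the class of $1$ in degree $2i$; the stated relations then hold by construction. The main obstacle, which I would treat with care, is verifying that the Yoneda product agrees with this chain-level description, so that no further relations are imposed on $\HH^{2*}(\Gamma)$.

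Finally, \textbf{Fg} follows at once: $\HH^{2*}(\Gamma) = k[x,z]/(x^a, a x^{a-1} z)$ is visibly Noetherian and of finite type, and by Remarks~\ref{remark.simplesenough}(i) it suffices to check that $\Ext^*_{\Gamma}(k,k)$ is finitely generated over $\HH^{2*}(\Gamma)$. The canonical map $\HH^{2*}(\Gamma) \to \Ext^{2*}_{\Gamma}(k,k)$ sends $x$ to $0$ and $z$ to $z$; by part (2), $\Ext^*_{\Gamma}(k,k)$ is generated as a $k[z]$-module by $1$ and $y$ (by $1$ alone if $a = 2$, since $y^2 = z$), hence it is finitely generated over $\HH^{2*}(\Gamma)$.
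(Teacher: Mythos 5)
Your proof is correct, and the computation is the standard one: explicit $2$-periodic resolutions (one-sided for $\Ext^*_{\Gamma}(k,k)$, bimodule for $\HH^*(\Gamma)$) together with chain-level lifts to pin down the ring structure. The paper's own proof simply cites \cite[Theorem~3.2]{Hol} for part (1) and states that part (2) ``can be read off directly from the projective resolution,'' so you are spelling out in full the same argument the paper implicitly relies on rather than taking a genuinely different route.
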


\begin{proof}
The first part is \cite[Theorem~3.2]{Hol}, the second part can be
read off directly from the projective resolution.
\end{proof}

Using this lemma and Theorem~\ref{theorem.moduleext}, we obtain the
following result on the $\Ext$-algebra of the simple module of a
quantum complete intersection.

\begin{theorem} \label{theorem.extring}
The $\Ext$-algebra of $k$ is given by
\[ \Ext^*_{\Lambda}(k,k) = k \left < y_1, \ldots y_n, z_1,
\ldots z_n \right > / I, \] where $I$ is the ideal in $k \left <
y_1, \ldots y_n, z_1, \ldots z_n \right >$ defined by the relations
\[ \left( \begin{matrix} y_i z_i - z_i y_i & \\ y_j y_i +
q_{ij} y_i y_j & i < j \\ y_j z_i - q_{ij}^2 z_i y_j & i < j \\ z_j
y_i - q_{ij}^2 z_i y_j & i < j \\ z_j z_i - q_{ij}^4 z_i z_j & i < j
\\ y_i^2 = z_i & a_i = 2 \\ y_i^2 & a_i \neq 2 \end{matrix} \right )
\]
\end{theorem}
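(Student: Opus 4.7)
The plan is induction on $n$. The base case $n=1$ is precisely Lemma~\ref{lemma.kxmodxn}(2), supplying the relations $y_1 z_1 = z_1 y_1$ together with either $y_1^2 = z_1$ (when $a_1 = 2$) or $y_1^2 = 0$ (otherwise). For the inductive step, let $\Lambda'$ be the subalgebra of $\Lambda$ generated by $x_1,\ldots,x_{n-1}$. The preceding lemma writes $\Lambda = \Lambda' \otimes_k^t k[x_n]/(x_n^{a_n})$ with $\tw{d_1,\ldots,d_{n-1}}{d_n} = \prod_{i=1}^{n-1} q_{in}^{d_i d_n}$. Since clearly $k = k \otimes_k^t k$ as a $\Lambda$-module, Theorem~\ref{theorem.moduleext} produces
\[ \Ext^*_\Lambda(k,k) = \Ext^*_{\Lambda'}(k,k) \otimes_k^{\tilde{t}} \Ext^*_{k[x_n]/(x_n^{a_n})}(k,k), \]
with $\tilde{t}((i,a),(j,b)) = (-1)^{ij}\tw{a}{b}$; by the inductive hypothesis and Lemma~\ref{lemma.kxmodxn}(2) the two factors admit the presentations of the statement restricted to indices $<n$ and $=n$ respectively.

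It then only remains to identify the relations in the twisted tensor product. Those supported entirely on one tensor factor transfer verbatim, accounting for all theorem relations except those involving both an index $i<n$ and the index $n$. The remaining cross-relations follow from the multiplication rule of the twisted tensor product, which for homogeneous $u$ of cohomological degree $p$ and internal degree $a$ in the first factor, and $v$ of cohomological degree $q$ and internal degree $b$ in the second, reads $(u \otimes 1)(1 \otimes v) = u \otimes v$ together with $(1 \otimes v)(u \otimes 1) = (-1)^{pq}\tw{a}{b}\,u \otimes v$. Substituting the internal degrees $|y_i|_{\mathrm{int}} = e_i$ and $|z_i|_{\mathrm{int}} = a_i e_i$ (read off from the standard minimal resolution of $k$ over $k[x]/(x^a)$), together with the explicit formula for $t$, one directly computes the commutation factors for each of the pairs $(y_i, y_n)$, $(y_i, z_n)$, $(z_i, y_n)$ and $(z_i, z_n)$ and compares with the $j=n$ relations in the statement.

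The main obstacle is essentially bookkeeping: one must cleanly separate the sign contribution of the cohomological grading to $\tilde{t}$ from the power-of-$q_{in}$ contribution of the internal grading, and verify that the resulting commutation factors match the exponents listed in the theorem. Since the twisted tensor product is a presentation of the Ext algebra, no further relations can arise, and the induction closes.
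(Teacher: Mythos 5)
Your approach is precisely the one the paper intends: the paper gives no detailed argument, merely stating that the theorem follows ``using this lemma and Theorem~\ref{theorem.moduleext},'' so the induction on $n$, the decomposition $\Lambda = \Lambda'\otimes_k^t k[x_n]/(x_n^{a_n})$, the observation $k = k\otimes_k^t k$, and the appeal to Theorem~\ref{theorem.moduleext} are exactly what is called for. The identification of internal degrees $|y_i|_{\mathrm{int}} = e_i$ and $|z_i|_{\mathrm{int}} = a_i e_i$, read off from the shifts in the minimal graded resolution of $k$ over $k[x]/(x^{a})$ (where $P_0 = \Gamma$, $P_1 = \Gamma\ds{1}$, $P_2 = \Gamma\ds{a}$, \dots), is also correct, and the assertion that a twisted tensor product of two presented graded algebras is presented by the union of the two sets of relations together with the twisted commutation relations between the factors is a routine fact.

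The problem is that you defer the one step with actual content --- evaluating $\tilde{t}$ on the four pairs of generators --- to the reader, and when one does it the result does \emph{not} match the relations printed in the theorem. From $(1\otimes v)(u\otimes 1) = \tilde{t}(|u|,|v|)\,u\otimes v$ with $\tilde{t}((p,a),(q,b)) = (-1)^{pq}\tw{a}{b}$ and $\tw{d_1,\ldots,d_{n-1}}{d_n} = \prod_{i<n} q_{in}^{d_i d_n}$, one gets for $i<n$:
\begin{align*}
y_n y_i &= -q_{in}\, y_i y_n, & y_n z_i &= q_{in}^{a_i}\, z_i y_n,\\
z_n y_i &= q_{in}^{a_n}\, y_i z_n, & z_n z_i &= q_{in}^{a_i a_n}\, z_i z_n.
\end{align*}
These exponents $a_i$, $a_n$, $a_i a_n$ agree with the stated $2$, $2$, $4$ only when $a_i = a_n = 2$, and the printed relation ``$z_j y_i - q_{ij}^2 z_i y_j$'' has the monomial $z_i y_j$ where the calculation yields $y_i z_j$. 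In other words, the theorem as printed in this version contains typographical errors --- the exponents should read $q_{ij}^{a_i}$, $q_{ij}^{a_j}$, $q_{ij}^{a_i a_j}$ --- and you would have discovered this had you actually carried out the ``bookkeeping'' you yourself flag as the main obstacle. A sketch that stops just short of the only nontrivial computation is not yet a proof.
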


\begin{remark} \label{remark.hhnotpos}
Lemma~\ref{lemma.kxmodxn} shows that if $\Gamma$ and $\Delta$ are
arbitrary algebras, then the algebra $\HH^*(\Gamma)
\otimes_k^{\widetilde{t}} \HH^*(\Delta)$ does not in general embed
into $\HH^*(\Gamma \otimes_k^t \Delta)$. Namely, the latter is
always graded commutative, whereas $\HH^*(\Gamma)
\otimes_k^{\widetilde{t}} \HH^*(\Delta)$ need not be.
\end{remark}

We are now ready to characterize precisely when a quantum complete
intersection satisfies {\bf{Fg}}.

\begin{theorem} \label{theorem.mainqci}
The following are equivalent.
\begin{enumerate}
\item $\Lambda$ satisfies {\bf{Fg}},
\item $\Lambda$ satisfies {\bf{Fg}} with respect to its even Hochschild
cohomology ring $\HH^{2*} ( \Lambda )$,
\item all the commutators $q_{ij}$ are roots of unity.
\end{enumerate}
\end{theorem}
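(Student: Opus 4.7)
The plan is to handle the equivalence in four steps. The implication $(2)\Rightarrow (1)$ is tautological, while $(1)\Rightarrow (2)$ follows at once from Remarks~\ref{remark.simplesenough}(ii); this leaves $(3)\Rightarrow (2)$ and $(1)\Rightarrow (3)$.

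For $(3)\Rightarrow (2)$ I would induct on the codimension $n$. The base case $n=1$ is Lemma~\ref{lemma.kxmodxn}. For the inductive step, the preceding structural lemma decomposes $\Lambda = \Lambda^{\prime}\otimes_k^t k[x_n]/(x_n^{a_n})$ with twist $\tw{d_1,\dots,d_{n-1}}{d_n} = \prod_i q_{in}^{d_id_n}$. Under the hypothesis that every $q_{in}$ is a root of unity, picking a common order $N$ shows that the subgroups $A^{\prime}\leq\mathbb{Z}^{n-1}$ and $B^{\prime}\leq \mathbb{Z}$ appearing in Theorem~\ref{theorem.hochschild} contain $(N\mathbb{Z})^{n-1}$ and $N\mathbb{Z}$ respectively, so they have finite index in $A$ and $B$. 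Separability over $k$ is automatic since the residue algebras are $k$ itself. Combining the induction hypothesis on $\Lambda^{\prime}$ with Lemma~\ref{lemma.kxmodxn} for $k[x_n]/(x_n^{a_n})$ puts us in the hypotheses of Corollary~\ref{corollary.finindexok}, and applying it gives (2) for $\Lambda$.

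For $(1)\Rightarrow (3)$ I would argue contrapositively: assume some $q_{i_0j_0}$ has infinite order in $k^{\times}$, set $E := \Ext^*_{\Lambda}(k,k)$, and seek a contradiction. Theorem~\ref{theorem.extring} shows that the Poincar\'e series of $E$ in cohomological degree equals $(1-t)^{-n}$, so the complexity of $k$ is $n$. If $\Lambda$ satisfied {\bf{Fg}}, $E$ would be finitely generated as a module over a commutative Noetherian graded $k$-algebra $H$, and the compatibility axiom in the definition of a cohomology operator ring forces the image $\bar H$ of $H$ in $E$ to lie in the centre $Z(E)$. Because $\bar H \subseteq E$ and $E$ is finitely generated as a $\bar H$-module, we have $\operatorname{GKdim}(\bar H) = \operatorname{GKdim}(E) = n$. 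The strategy is now to show $\operatorname{GKdim}(Z(E)) \leq n - 1$, which produces the desired contradiction. The crucial piece is the quantum affine subalgebra $R\subseteq E$ generated by $z_1,\dots,z_n$ with relations $z_jz_i = q_{ij}^{4}z_iz_j$ for $i<j$. Setting $Q_{ik} := q_{ik}^{4}$ when $i<k$ and $Q_{ik} := q_{ki}^{-4}$ when $i>k$, a monomial $z^{\alpha}$ is central in $R$ if and only if $\prod_iQ_{ik}^{\alpha_i} = 1$ in $k^{\times}$ for every $k$. The vector $e_{i_0}$ has image whose $j_0$-th entry is $q_{i_0j_0}^{\pm 4}$, of infinite order, so the image of the centrality map $\mathbb{Z}^n\to (k^{\times})^n$ has torsion-free rank at least one, and its kernel has $\mathbb{Z}$-rank at most $n-1$, bounding $\operatorname{GKdim}(Z(R))\leq n-1$. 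Since the $y$-generators are nilpotent, they contribute only a bounded, $R$-torsion piece to $E$, and I would transfer the bound to $\operatorname{GKdim}(Z(E))\leq n-1$ to finish.

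The main obstacle will be this final transfer. The mixed relations $y_jz_i = q_{ij}^2 z_iy_j$ and $z_jy_i = q_{ij}^2 z_iy_j$ in Theorem~\ref{theorem.extring} do not make $R$ invariant under left multiplication by $y$'s in a clean way, so verifying that no central elements involving $y$'s raise $\operatorname{GKdim}(Z(E))$ above $n-1$ requires a careful analysis of centralizers rather than a purely formal appeal to the nilpotence of the $y_i$.
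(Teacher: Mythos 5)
Your handling of $(2)\Rightarrow(1)$, $(1)\Rightarrow(2)$, and $(3)\Rightarrow(2)$ matches the paper's argument; for $(3)\Rightarrow(2)$ the paper simply invokes Corollary~\ref{corollary.finindexok}, leaving the inductive decomposition, the finite-index check, and the separability remark implicit, so your more detailed account is fine. Your observation that the compatibility axiom places the image of $H$ inside $Z(E)$ is also correct and is exactly what the paper uses when it says $E$ is finitely generated over its centre.

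For $(1)\Rightarrow(3)$, however, you have a genuine gap, and you flag it yourself. Your plan is to bound $\operatorname{GKdim}(Z(E))$ by $n-1$ against $\operatorname{GKdim}(E) = n$. The analysis of the quantum affine subalgebra $R$ generated by the $z_i$'s is correct and does give $\operatorname{GKdim}(Z(R)) \leq n-1$; but the passage from $Z(R)$ to $Z(E)$ is exactly where the argument is incomplete. Central elements of $E$ need not lie in $R$, the centraliser $C_E(R)$ is not obviously a small $Z(R)$-module in view of the mixed $y$--$z$ relations, and nilpotence of the $y_i$'s does not by itself force a drop in growth. As written, nothing rules out $\operatorname{GKdim}(Z(E)) = n$, so the contradiction is not yet established.

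The paper sidesteps the transfer problem entirely by passing to a \emph{quotient} of $E$ rather than a subalgebra: if $E$ is finitely generated over $Z(E)$, then every quotient of $E$ is finitely generated over the image of $Z(E)$, which lies in the centre of the quotient. Killing all $y_k,z_k$ with $k \notin \{i_0,j_0\}$, together with the $y_k$ for $k\in\{i_0,j_0\}$ having $y_k^2=0$, leaves a quantum plane $k\langle r,s\rangle/(sr-qrs)$ with $q$ a nonzero power (up to sign) of $q_{i_0 j_0}$, hence not a root of unity. Such a quantum plane is infinite dimensional with centre $k$, so it is not finitely generated over its centre --- an immediate contradiction requiring no GK-dimension estimates. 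If you wish to repair your approach, this is the cleanest fix: replace the subalgebra $R$ and the centraliser analysis by this quotient, using that finite generation over the centre is inherited by quotients.
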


\begin{proof}
The implication (2) $\then$ (1) is obvious, and the implication (3)
$\then$ (2) follows from Corollary~\ref{corollary.finindexok}. To
show (1) $\then$ (3), we assume that (1) holds but not (3), so there
are $i$ and $j$ such that $q_{ij}$ is not a root of unity. By (1),
the $\Ext$-algebra of $k$ is finitely generated as a module over its
center, hence so is every quotient of this ring. By factoring out
all $y_k, z_k$ with $k \not\in \{i, j\}$ and $\{y_k \mid k \in
\{i,j\} \text{ and } y_k^2 = 0\}$, we obtain a ring of the form
$k\left<r,s\right> / (sr - qrs)$, where $q$ is not a root of unity.
The center of this ring is trivial, hence the ring cannot be
finitely generated over its center, a contradiction.
\end{proof}

As a corollary, we obtain a lower bound for the representation
dimension of a quantum complete intersection. Recall that the
representation dimension of a finite dimensional algebra $\Delta$ is
defined as
$$\repdim \Delta \stackrel{\text{def}}{=} \inf \{ \gld \End_{\Delta}
(M) \},$$ where the infimum is taken over all the finitely generated
$\Delta$-modules which generate and cogenerate $\Delta\mod$.

\begin{corollary} \label{corollary.lowerrepdimqci}
Define the integer $c \ge 0$ by
$$c \stackrel{\emph{def}}{=} \max \{ \card I \mid I \subseteq \{ 1 ,
\ldots, n \} \text{ and } q_{ij} \text{ is a root of unity } \forall
i, j \in I, i<j \}.$$ Then $\repdim \Lambda \ge c+1$. In particular,
if all the commutators $q_{ij}$ are roots of unity, then $\repdim
\Lambda \ge n+1$.
\end{corollary}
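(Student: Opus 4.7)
The plan is to reduce to the ``in particular'' case, where all the $q_{ij}$ are roots of unity, and then invoke the representation dimension estimate from \cite{Bergh} in conjunction with the finite generation supplied by Theorem~\ref{theorem.mainqci}.

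First I would fix a subset $I \subseteq \{1, \ldots, n\}$ of cardinality $c$ such that $q_{ij}$ is a root of unity for every $i<j$ in $I$, and then permute the generators so that $I = \{1, \ldots, c\}$; this replaces $\Lambda$ by an isomorphic algebra and so does not change $\repdim \Lambda$. Iterating the lemma preceding Theorem~\ref{theorem.mainqci} expresses $\Lambda$ as a twisted tensor product $\Lambda = \Lambda' \otimes_k^t \Lambda''$, where $\Lambda'$ is the codimension-$c$ quantum complete intersection generated by $x_1, \ldots, x_c$ and $\Lambda''$ is the one generated by $x_{c+1}, \ldots, x_n$. By construction $\Lambda'$ has only root-of-unity commutators, so Theorem~\ref{theorem.mainqci} shows that $\Lambda'$ satisfies {\bf{Fg}}.

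Next I would transport this information to $\Lambda$ using Sections~3 and 4. Identifying the simple $\Lambda$-module $k$ with $k \otimes_k^t k$, Theorem~\ref{theorem.moduleext} gives
\[ \Ext^*_{\Lambda}(k,k) \;=\; \Ext^*_{\Lambda'}(k,k) \otimes_k^{\tilde t} \Ext^*_{\Lambda''}(k,k), \]
while Theorem~\ref{theorem.hochschild} produces a subalgebra of $\HH^*(\Lambda)$ into which $\HH^{*,A'}(\Lambda')$ embeds. Because $\Lambda'$ satisfies {\bf{Fg}} and the simple module over a codimension-$c$ quantum complete intersection has complexity $c$, I obtain a commutative Noetherian subalgebra $H \subseteq \HH^{2*}(\Lambda)$ of Krull dimension $c$ whose action on the first tensor factor makes $\Ext^*_{\Lambda'}(k,k)$ a finitely generated $H$-module of Krull dimension $c$. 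Invoking the representation dimension bound of \cite{Bergh}, which converts this data into the inequality $\repdim \Lambda \geq 1 + c$, completes the proof; the ``in particular'' statement is the special case $I = \{1, \ldots, n\}$.

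The main obstacle will be the transfer step. Because $\Lambda''$ need not satisfy {\bf{Fg}} (this is precisely why we restricted to the maximal root-of-unity subset $I$), Corollary~\ref{corollary.finindexok} does not apply directly to the decomposition $\Lambda = \Lambda' \otimes_k^t \Lambda''$, and the full ring $\HH^*(\Lambda)$ may fail any reasonable finite generation property. The technical care therefore goes into checking that the subalgebra $H$ coming only from the $\Lambda'$-factor is sufficient to meet the hypotheses of the bound in \cite{Bergh}, and that the complexity-$c$ behaviour on $\Ext^*_{\Lambda'}(k,k)$ really does survive inside $\Ext^*_\Lambda(k,k)$ after tensoring with the possibly pathological factor $\Ext^*_{\Lambda''}(k,k)$.
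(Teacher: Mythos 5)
Your strategy correctly identifies the right subalgebra $\Lambda'$ (the one generated by the $x_i$ with $i$ in a maximal root-of-unity subset $I$) and correctly uses Theorem~\ref{theorem.mainqci} to see that $\Lambda'$ satisfies {\bf Fg}. However, the step you yourself flag as the ``main obstacle'' is a genuine gap, not a technical detail to be checked. The bound in \cite{Bergh} is a statement about an algebra that itself satisfies {\bf Fg}; since $\Lambda$ will in general not satisfy {\bf Fg} (that is precisely the point of restricting to the subset $I$), you cannot apply that bound to $\Lambda$ directly. Producing a commutative Noetherian subalgebra $H$ of $\HH^{2*}(\Lambda)$ via Theorem~\ref{theorem.hochschild} is not by itself enough: you would need $\Ext^*_{\Lambda}(k,k) = \Ext^*_{\Lambda'}(k,k) \otimes_k^{\tilde t} \Ext^*_{\Lambda''}(k,k)$ to be a finitely generated $H$-module, and the factor $\Ext^*_{\Lambda''}(k,k)$ can be badly behaved (indeed, when some $q_{ij}$ is not a root of unity, the ring $\Ext^*_{\Lambda}(k,k)$ fails to be finite over its center, as the proof of Theorem~\ref{theorem.mainqci} shows). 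So there is no obvious way to make the hypotheses of \cite{Bergh} hold for $\Lambda$, and your outline does not supply one.

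The paper avoids this issue entirely by never attempting to establish anything cohomological about $\Lambda$ itself. Instead it works with $\Lambda'$ as a standalone algebra and gets $\dim(\Lambda'\stabmod) \geq \cx_{\Lambda'} k - 1 = \card I - 1$ from \cite[Theorem~3.1]{Bergh} together with Theorem~\ref{theorem.extring}. The transfer to $\Lambda$ is then purely categorical: the restriction functor $\Lambda\mod \rTo \Lambda'\mod$ along the inclusion $\Lambda' \hookrightarrow \Lambda$ is exact, sends projectives to projectives, and is dense (every $\Lambda'$-module inflates to a $\Lambda$-module through the quotient map killing the remaining generators). It therefore induces a dense triangle functor $\Lambda\stabmod \rTo \Lambda'\stabmod$, and \cite[Lemma~3.4]{Rou} gives $\dim(\Lambda\stabmod) \geq \dim(\Lambda'\stabmod)$. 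Combined with Rouquier's inequality $\repdim \Lambda \geq \dim(\Lambda\stabmod) + 2$ from \cite[Proposition~3.7]{Rou1}, this yields $\repdim \Lambda \geq c + 1$. The key move you are missing is this passage through dimensions of stable module categories and the dense triangle functor coming from restriction of scalars; it replaces the problematic Hochschild-cohomology transfer you outlined.
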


In order to prove this result we need to recall some notions. Let
$\Delta$ be an algebra, and let $M$ be a finitely generated
$\Delta$-module with minimal projective resolution
$$\cdots \rTo P_2 \rTo P_1 \rTo P_0 \rTo M \rTo 0,$$
say. The \emph{complexity} of $M$, denoted $\cx M$, is defined as
$$\cx M \stackrel{\text{def}}{=} \inf \{ t \in \mathbb{N} \cup \{ 0
\} \mid \exists r \in \mathbb{R} \text{ such that } \dim_k P_n \le
rn^{t-1} \text{ for } n \gg 0 \}.$$ Now let $V$ be a positively
graded $k$-vector space of finite type, i.e.\ $\dim_k V_n < \infty$
for all $n$. The \emph{rate of growth} of $V$, denoted $\gamma (V)$,
is defined as
$$\gamma (V) \stackrel{\text{def}}{=} \inf \{ t \in \mathbb{N} \cup \{ 0
\} \mid \exists r \in \mathbb{R} \text{ such that } \dim_k V_n \le
rn^{t-1} \text{ for } n \gg 0 \}.$$ It is well known that the
complexity of a module $M$ equals $\gamma \left ( \Ext_{\Delta}^*
(M, \Delta / \Rad \Delta ) \right )$. Now suppose $\Delta$ is
selfinjective, and denote by $\Delta\stabmod$ the stable module
category of $\Delta\mod$, that is, the category obtained from
$\Delta\mod$ by factoring out all morphisms which factor through a
projective module. This is a triangulated category, and we denote by
$\dim ( \Delta \stabmod )$ its dimension, as defined in \cite{Rou}.

\begin{proof}[Proof of Corollary~\ref{corollary.lowerrepdimqci}]
Choose a subset $I$ of $\{ 1 , \ldots, n \}$ realizing the maximum
in the definition of the integer $c$, and let $\Lambda^{\prime}$ be
the subalgebra of $\Lambda$ generated by the $x_i$ with $i \in I$.
By Theorem~\ref{theorem.mainqci} the algebra $\Lambda^{\prime}$
satisfies {\bf{Fg}}, and so from \cite[Theorem~3.1]{Bergh} we see
that $\dim ( \Lambda^{\prime}\stabmod ) \geq \cx_{\Lambda'} k - 1$.
Moreover, by Theorem~\ref{theorem.extring} the complexity of $k$ as
a $\Lambda'$-module equals $\card I$, giving $\dim (
\Lambda^{\prime}\stabmod ) \geq \card I - 1$.

The forgetful functor $\Lambda\mod \rTo \Lambda^{\prime}\mod$ is
exact, dense, and maps projective $\Lambda$-modules to projective
$\Lambda^{\prime}$-modules. Therefore it induces a dense triangle
functor $\Lambda\stabmod \rTo \Lambda^{\prime}\stabmod$, and so from
\cite[Lemma~3.4]{Rou} we obtain the inequality $\dim (
\Lambda\stabmod ) \geq \dim ( \Lambda^{\prime}\stabmod )$. Finally,
by \cite[Proposition~3.7]{Rou1} the inequality $\repdim \Lambda \geq
\dim ( \Lambda\stabmod ) + 2$ holds, and the proof is complete.
\end{proof}

\begin{remark} \label{remark.2nupper}
By \cite[Theorem~3.2]{BeO} the inequality $\repdim \Lambda \leq 2n$
always holds.
\end{remark}

It was shown in \cite{MR2236768} that the representation dimension
of the truncated polynomial algebra $k[x,y] / (x^2, y^a)$ is three.
Using their construction and exactly the same proof, one can show
that the quantum complete intersection $\Gamma = k\left<x,y\right> /
(yx-qxy, x^2, y^a)$ has a generator-cogenerator $M$ which is graded
with $\gld \End_{\Gamma} (M) = 3$. Moreover, for a quantum exterior
algebra $\Gamma$ on $n$ variables (that is, a codimension $n$
quantum complete intersection where all the defining exponents are
$2$), the global dimension of the endomorphism ring of the graded
generator-cogenerator $\oplus \Gamma / ( \Rad \Gamma )^i$ is $n+1$
(cf.\ \cite{AuRD}). Using this and
Proposition~\ref{proposition.repdimadd}, we obtain the following
improvement of Remark~\ref{remark.2nupper}.

\begin{theorem}
If $h = \card \{i \mid a_i = 2\}$, then
\[ \repdim \Lambda \leq \left\{ \begin{matrix} 2n - h & \text{ if } h \leq
n/2 \\ 2n-h+1 & \text{ if } h > n/2. \end{matrix} \right.
\]
\end{theorem}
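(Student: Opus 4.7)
The plan is to realise $\Lambda$ as an iterated twisted tensor product of a small list of explicit quantum complete intersections, each equipped with a concrete generator--cogenerator of known endomorphism-algebra global dimension, and then combine these via Proposition~\ref{proposition.repdimadd}. Three types of block will appear, all with known generator--cogenerators: (i) a truncated polynomial algebra $k[x]/(x^a)$ with $a\ge 3$, admitting $\Gamma\oplus\Gamma/\Rad\Gamma$ as a generator--cogenerator with $\gld\End=2$; (ii) a mixed codimension-two quantum complete intersection $k\langle x,y\rangle/(x^2,y^a,yx-qxy)$ with $a\ge 3$, admitting a graded generator--cogenerator with $\gld\End=3$ by the construction of \cite{MR2236768} quoted in the paragraph preceding the theorem; and (iii) a quantum exterior algebra on $m$ generators, admitting $\bigoplus\Gamma/(\Rad\Gamma)^i$ as a generator--cogenerator with $\gld\End=m+1$, as recalled there.

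First I would reorder the variables so that $a_1,\dots,a_{n-h}\ge 3$ and $a_{n-h+1}=\cdots=a_n=2$. Iterating the lemma that splits off one generator, for any partition of $\{1,\dots,n\}$ the algebra $\Lambda$ is isomorphic to the corresponding iterated twisted tensor product of quantum complete intersections on the blocks of the partition; the commutation scalars on each block are simply the appropriate subset of the $q_{ij}$'s.

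Then I would choose the partition according to the size of $h$. If $h\le n/2$, pair each of the $h$ indices with $a_i=2$ with a distinct index having $a_j\ge 3$ (yielding $h$ blocks of type (ii)) and leave the remaining $n-2h$ indices with $a_i\ge 3$ as singletons (blocks of type (i)). Iterating Proposition~\ref{proposition.repdimadd} then produces a generator--cogenerator of $\Lambda$ with
\[ \gld\End = 3h + 2(n-2h) = 2n-h. \]
If $h>n/2$, pair each of the $n-h$ indices with $a_i\ge 3$ with a distinct index having $a_j=2$ (yielding $n-h$ blocks of type (ii)) and bundle the remaining $2h-n$ indices with $a_i=2$ into a single block of type (iii), which is a quantum exterior algebra on $2h-n$ generators. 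This yields
\[ \gld\End = 3(n-h) + \bigl((2h-n)+1\bigr) = 2n-h+1. \]
Either way, the resulting module is a generator--cogenerator of $\Lambda\mod$ whose endomorphism algebra has global dimension equal to the claimed upper bound, and $\repdim\Lambda$ is bounded by this number.

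The main obstacle, and the only point where external input is needed, is the assertion that the construction of \cite{MR2236768} giving $\gld\End=3$ for $k[x,y]/(x^2,y^a)$ adapts verbatim to the quantised version $k\langle x,y\rangle/(x^2,y^a,yx-qxy)$; this is asserted in the paragraph preceding the theorem. Once these type-(ii) building blocks are available, the rest of the argument is a straightforward bookkeeping exercise combining Proposition~\ref{proposition.repdimadd} with the elementary arithmetic above.
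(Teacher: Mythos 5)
Your decomposition and arithmetic are exactly the paper's: $h$ (respectively $n-h$) blocks of the form $k\langle x,y\rangle/(x^2,y^a,yx-qxy)$, together with either $n-2h$ singleton blocks $k[x]/(x^a)$ or one quantum exterior algebra on $2h-n$ generators, combined via Proposition~\ref{proposition.repdimadd}. The one error is in your description of the type~(i) generator--cogenerator. For $\Gamma = k[x]/(x^a)$ with $a \ge 3$, the module $\Gamma \oplus \Gamma/\Rad\Gamma$ does \emph{not} realize $\gld\End = 2$: already for $a=3$ a direct computation shows that the first syzygy of the simple $\End_\Gamma(\Gamma\oplus k)$-module at the vertex corresponding to $\Gamma$ is isomorphic to its own syzygy, so that endomorphism algebra has infinite global dimension. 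The module one needs here --- and the one the paper's phrase ``graded Auslander generators'' refers to --- is the full graded additive generator $\bigoplus_{i=1}^{a} \Gamma/(\Rad\Gamma)^i$, whose endomorphism algebra has global dimension $2$ by Auslander's theorem for representation-finite algebras. With that substitution, your argument is correct and coincides with the paper's proof.
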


\begin{proof}
In the first case decompose the algebra into $h$ parts of the form
$k\left<x,y\right> / (yx-qxy, x^2, y^a)$, and $n - 2h$ parts of the
form $k[x] / (x^a)$. Adding up the global dimensions of the
endomorphism rings of the graded Auslander generators (which we may
do by Proposition~\ref{proposition.repdimadd}), we obtain $h \cdot 3
+ (n-2h) \cdot 2 = 2n - h$. In the second case, we decompose the
algebra into $n-h$ parts of the form $k\left<x,y\right> / (yx-qxy,
x^2, y^a)$, and a quantum exterior algebra on $2h-n$ variables, and
add up global dimensions as above.
\end{proof}

\bibliographystyle{amsplain}
\bibliography{sources}

\end{document}